\def\VR{\kern-\arraycolsep\strut\vrule &\kern-\arraycolsep}
\def\vr{\kern-\arraycolsep & \kern-\arraycolsep}
\newtheorem{theorem}{Theorem}
\newtheorem{lemma}[theorem]{Lemma}
\newtheorem{prop}[theorem]{Proposition}
\theoremstyle{definition}
\newtheorem{definition}[theorem]{Definition}
\newtheorem{rmk}{Remark}
\newenvironment{remark}[1][]{\begin{rmk}[#1]\pushQED{\qed}}{\popQED \end{rmk}}
\newtheorem{obs}{Observation}
\newenvironment{observation}[1][]{\begin{obs}[#1]\pushQED{\qed}}{\popQED \end{obs}}
\newtheorem{ex}{Example}
\newenvironment{example}[1][]{\begin{ex}[#1]\pushQED{\qed}}{\popQED \end{ex}}
\newcommand{\leftsub}[2]{{\vphantom{#2}}_{#1}{#2}}
\newcommand{\Hom}{\operatorname{Hom}}
\newcommand{\End}{\operatorname{End}}
\newcommand{\Ext}{\operatorname{Ext}}
\newcommand{\ext}{\operatorname{ext}}
\newcommand{\rep}{\operatorname{rep}}
\newcommand{\Proj}{\operatorname{Proj}}
\newcommand{\gr}{\operatorname{gr}}
\newcommand{\SI}{\operatorname{SI}}
\newcommand{\SL}{\operatorname{SL}}
\newcommand{\GL}{\operatorname{GL}}
\newcommand{\PGL}{\operatorname{PGL}}
\newcommand{\ZZ}{\mathbb Z}
\newcommand{\PP}{\mathbb P}
\newcommand{\Ima}{\operatorname{Im}}
\newcommand{\Id}{\operatorname{Id}}
\newcommand{\Mat}{\operatorname{Mat}}
\newcommand{\Sch}{\operatorname{Schur}}
\newcommand{\ddim}{\operatorname{\mathbf{dim}}}
\newcommand{\dd}{\operatorname{\mathbf{d}}}
\newcommand{\ee}{\operatorname{\mathbf{e}}}
\newcommand{\hh}{\operatorname{\mathbf{h}}}
\newcommand{\pdim}{\mathsf{pdim}}
\newcommand{\C}{\mathcal{C}}
\newcommand{\R}{\operatorname{\mathcal{R}}}
\newcommand{\M}{\operatorname{\mathcal{M}}}
\newcommand{\module}{\operatorname{mod}}
\newcommand{\Schur}{\operatorname{Schur}}
\newcommand{\rk}{\operatorname{rank}}
\newcommand{\key}[1]{\emph{#1}}
\begin{document}
\title{Moduli spaces of modules of Schur-tame algebras}

\author{Andrew T. Carroll}
\address{University of Missouri-Columbia, Mathematics Department, Columbia, MO, USA}
\email[Andrew T. Carroll]{carrollat@missouri.edu}

\author{Calin Chindris}
\address{University of Missouri-Columbia, Mathematics Department, Columbia, MO, USA}
\email[Calin Chindris]{chindrisc@missouri.edu}

\date{\today}
\bibliographystyle{plain}
\subjclass[2000]{16G10, 16G60, 16R30}
\keywords{Schur-tame algebras, moduli spaces of modules, string algebras}

\begin{abstract} In this paper, we first show that for an acyclic gentle algebra $A$, the irreducible components of any moduli space of $A$-modules are products of projective spaces. Next, we show that the nice geometry of the moduli spaces of modules of an algebra does not imply the tameness of the representation type of the algebra in question. Finally, we place these results in the general context of moduli spaces of modules of Schur-tame algebras. More specifically, we show that for an arbitrary Schur-tame algebra $A$ and $\theta$-stable irreducible component $C$ of a module variety of $A$-modules, the moduli space $\M(C)^{ss}_{\theta}$ is either a point or a rational projective curve.
\end{abstract}

\maketitle
\setcounter{tocdepth}{1}
\tableofcontents

\section{Introduction}
Throughout, $K$ denotes an algebraically closed field of characteristic zero. All algebras are assumed to be bound quiver algebras, and all modules are assumed to be finite-dimensional left modules.

Our goal in this paper is to study the module category of an algebra $A$ within the general framework of geometric invariant theory. The geometric objects that we are interested in are the moduli spaces of semi-stable $A$-modules constructed by King in \cite{K}, using methods from geometric invariant theory. On the geometric side, these moduli spaces of modules can be arbitrarily complicated, in the sense that any projective variety can be realized as a moduli space of thin modules of some triangular algebra (see \cite{Hil}). On the representation theory side, the closed points of a moduli space of $A$-modules correspond to direct sums of rather special Schur $A$-modules. Hence, from the point of view of invariant theory, one is naturally led to think of an algebra based on the complexity of its Schur modules. In this paper, we focus on those algebras whose Schur modules have a tame behavior. These algebras, called Schur-tame, form a large class which goes beyond the class of tame algebras. Our objective is to describe the tameness, and more generally the Schur-tameness, of an algebra in terms of invariant theory. This line of research has attracted much attention during the last two decades (see for example  \cite{Bob1}, \cite{Bob5}, \cite{Bob6}, \cite{BS1}, \cite{Carroll1}, \cite{CC6}, \cite{CC9}, \cite{CC12}, \cite{Domo2}, \cite{GeiSch}, \cite{Rie}, \cite{Rie-Zwa-1}, \cite{Rie-Zwa-2}, \cite{SW1}).

A complete description of the tameness of quasi-tilted algebras in terms of their moduli spaces of modules can be found in  \cite{Bob5} and \cite{CC10}. In this paper, we first describe the irreducible components of all moduli spaces of modules for acyclic gentle algebras (for partial results, see \cite{CC13}). The indecomposable modules of gentle algebras can be nicely classified, however these tame algebras still represent an increase in the level of complexity from the tame quasi-tilted case.  For example, the global dimension of acyclic gentle algebras can be arbitrarily large. Furthermore, the number of one-parameter families required to parameterize $d$-dimensional indecomposable modules can grow faster than any polynomial in $d$.

\begin{theorem}\label{thm-main-1} Let $A=KQ/I$ be an acyclic gentle algebra, $\dd \in \ZZ^{Q_0}_{\geq 0}$ a dimension vector of $A$, and $\theta \in \ZZ^{Q_0}$ an integral weight such that $\dd$ is $\theta$-semi-stable. Then the irreducible components of the moduli space $\M(A,\dd)^{ss}_{\theta}$ are just products of projective spaces. 
\end{theorem}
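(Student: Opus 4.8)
The plan is to combine the string-and-band classification of the indecomposable $A$-modules with King's description of the closed points of $\M(A,\dd)^{ss}_{\theta}$ as $S$-equivalence classes of $\theta$-semi-stable modules. Recall that each such class has a distinguished representative, namely a $\theta$-polystable module, i.e.\ a direct sum $\bigoplus_i M_i^{\,a_i}$ of pairwise non-isomorphic $\theta$-stable modules $M_i$, each of $\theta$-weight zero. Since $A$ is gentle, every indecomposable $A$-module is either a string module $M(w)$ or a band module $M(b,\lambda,n)$ with $\lambda\in K^*$ and $n\geq 1$. The first task is therefore to determine which indecomposables can occur as the stable constituents $M_i$ (equivalently, which are bricks of $\theta$-weight zero with no proper subrepresentation of the same weight), and to understand how these constituents vary in families.

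First I would analyze the band modules. For a band $b$ compatible with $\theta$, the quasi-simple band modules $M(b,\lambda,1)$, $\lambda\in K^*$, form a one-parameter family of $\theta$-stable bricks whose natural compactification is a $\PP^1$, the two boundary points being the string modules obtained by degenerating $\lambda$ to $0$ and $\infty$. The essential input here is the computation that $M(b,\lambda,1)$ is a brick with $\Ext^1(M(b,\lambda,1),M(b,\lambda,1))\cong K$, so that these modules sweep out smooth rational curves (the homogeneous tubes) inside the module variety. Because $A$ is acyclic, only finitely many bands can contribute to a fixed $\dd$, and the relevant list is effectively computable. Any remaining $\theta$-stable constituent that does not lie on such a curve is a rigid brick, and hence contributes only an isolated point.

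Next I would assemble the moduli space out of these pieces. Fixing an irreducible component amounts to fixing a \emph{generic decomposition}
$$\dd \;=\; \sum_i \ddim M_i \;+\; \sum_j m_j\,\ddim M(b_j)$$
into the dimension vectors of its stable constituents, where the $M_i$ are the rigid stable summands and the $b_j$ are the contributing bands with multiplicities $m_j$. The $\theta$-polystable modules of this type are parameterized by the choice of an unordered $m_j$-tuple of points of $\PP^1$ for each band $b_j$, that is, by a point of $\prod_j \operatorname{Sym}^{m_j}(\PP^1)$. Using the classical isomorphism $\operatorname{Sym}^{m}(\PP^1)\cong\PP^{m}$, the corresponding component of $\M(A,\dd)^{ss}_{\theta}$ becomes $\prod_j \PP^{m_j}$, a product of projective spaces, the rigid part contributing only trivial point factors.

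The main obstacle is to make the passage from these local families to a genuine global product rigorous: one must control the extensions among the distinct stable constituents so as to rule out that the component is some nontrivial family glued from, or degeneration of, the factors $\operatorname{Sym}^{m_j}(\PP^1)$. I expect to handle this through the \'etale-local model of the moduli space about a polystable point $\bigoplus_i M_i^{\,a_i}$, which is the moduli space of representations of the local $\Ext$-quiver whose loops record the self-extensions of the constituents and whose arrows record $\ext^1(M_i,M_j)$ between distinct constituents. The point to verify for gentle algebras is that each band constituent carries exactly one loop, namely the one-dimensional self-extension found above, that the rigid constituents carry none, and that the arrows between distinct constituents contribute no new moduli directions. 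Granting this, the local model is precisely the product of the symmetric-power quotients computed above, and patching the local products yields the asserted global product-of-projective-spaces structure.
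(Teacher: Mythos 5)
Your global picture agrees with the paper's: the $\theta$-stable constituents of a generic module in a component $C\subseteq\module(A,\dd)$ are either rigid bricks (orbit closures, contributing points) or quasi-simple band modules sweeping out $\PP^1$-families, and the corresponding component of the moduli space should be $\prod_j S^{m_j}(\PP^1)\cong\prod_j\PP^{m_j}$. However, the two steps you defer are not routine verifications --- they are the actual content of the proof, and one of them would fail as you describe it. First, the assertion that ``the arrows between distinct constituents contribute no new moduli directions'' is exactly the statement $\ext^1_A(C_i,C_j)=0$ for distinct stable regular components, and it must be proved, not assumed: if it failed, the direct-sum locus $\bigoplus_i C_i^{\oplus m_i}$ would not lie in $C$ and the local $\Ext$-quiver would have genuine arrows. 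The paper proves it by an Euler-form computation: the regular components satisfy $\langle\langle\dd_i,\dd_i\rangle\rangle=0$ and contain generic modules of projective dimension at most $1$; since non-isomorphic $\theta$-stable modules $M_i,M_j$ have $\Hom_A(M_i,M_j)=0$, one gets $\langle\langle\dd_i,\dd_j\rangle\rangle=-\dim_K\Ext^1_A(M_i,M_j)\leq 0$, and expanding $0=\langle\langle\sum_i m_i\dd_i,\sum_i m_i\dd_i\rangle\rangle$ forces every cross term to vanish. Nothing in your outline supplies this input.

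Second, ``patching the local products'' does not yield the conclusion: a projective variety that is \'etale-locally a product of (open subsets of) projective spaces near every point is not thereby globally such a product, since \'etale-local data cannot distinguish, say, two smooth varieties of the same dimension. Moreover, applying the Luna-slice/local-quiver model requires control of the singularities of $\module(A,\dd)$ at the chosen polystable point, which you do not address. What is needed instead is a global morphism realizing your pointwise parameterization, and this is how the paper proceeds: normality of the components (they are products of varieties of complexes), the Ext-vanishing above, and Proposition~\ref{prop-reduction} together produce a bijective morphism $\prod_i S^{m_i}(\M(C_i)^{ss}_{\theta})\cong\M(C')^{ss}_{\theta}\to\M(C)^{ss}_{\theta}$ onto a normal variety, which in characteristic zero is an isomorphism; each $\M(C_i)^{ss}_{\theta}$ is a $\PP^1$ and $S^{m_i}(\PP^1)\cong\PP^{m_i}$ finishes the argument. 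I would replace your local-model step with this global bijectivity-plus-normality argument (Theorem~\ref{theta-stable-decomp-thm} and Proposition~\ref{prop-reduction}), which is precisely what upgrades the bijection on closed points with $\prod_j\operatorname{Sym}^{m_j}(\PP^1)$ to an isomorphism of varieties.
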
 

Our next theorem shows that the tameness of an algebra is not a reflection of the nice geometry of its moduli spaces. In particular, this provides an acyclic counterexample to (one of the implications in) Weyman's tameness conjecture (see \cite{CC13}). The algebra in the theorem below was communicated to us by Kinser and is based on Ringel's paper \cite{Rin2013}. 

\begin{theorem}\label{thm-main-2} Let $A=KQ/I$ be the wild Schur-tame algebra where:

$$
Q=\vcenter{\hbox{  
\begin{tikzpicture}[point/.style={shape=circle, fill=black, scale=.3pt,outer sep=3pt},>=latex]
   \node[point,label={left:$1$}] (1) at (0,0) {};
   \node[point,label={above:$2$}] (2) at (1.5,1) {};
   \node[point,label={below:$3$}] (3) at (3,0) {};
   \node[point,label={above:$5$}] (5) at (4.5,1) {};
   \node[point,label={below:$4$}] (4) at (5,0) {};   
  \path[->]
   (2) edge  node[midway, above] {$\alpha$} (1)
   (3) edge  node[midway, above] {$\beta$} (2)   
   (3) edge  (1)
   (5) edge  (3)
   (4) edge  (3);
   \end{tikzpicture} 
}} \text{~~~and~~~~} I=\langle \alpha\beta \rangle. 
$$

Let $\dd \in \ZZ^5_{\geq 0}$ be a dimension vector of $A$, $C \subseteq \module(A,\dd)$ an irreducible component, and $\theta \in \ZZ^5$ a weight of $A$ with $C^{ss}_{\theta} \neq \emptyset$. Then the moduli space $\M(C)^{ss}_{\theta}$ is a projective space.
\end{theorem}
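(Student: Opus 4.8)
The plan is to reduce the computation of $\M(C)^{ss}_\theta$ to the moduli spaces of the individual $\theta$-stable pieces of $C$, and then to exploit the very restricted supply of one-parameter families of bricks afforded by this particular algebra. First I would invoke the generic $\theta$-stable decomposition of the component: writing it as $C=\overline{C_1^{\oplus m_1}\oplus\cdots\oplus C_l^{\oplus m_l}}$ with the $C_i$ pairwise non-isomorphic $\theta$-stable irreducible components (each satisfying $\theta\cdot\ddim C_i=0$), one has an induced decomposition of moduli spaces
\[
\M(C)^{ss}_\theta\;\cong\;\prod_{i=1}^{l}\operatorname{Sym}^{m_i}\bigl(\M(C_i)^{ss}_\theta\bigr).
\]
Since $\operatorname{Sym}^{m}(\PP^1)\cong\PP^{m}$ and $\operatorname{Sym}^{m}(\text{point})$ is a point, the whole product collapses to a single projective space precisely when at most one of the factors $\M(C_i)^{ss}_\theta$ is positive-dimensional. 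So everything is reduced to understanding the $\theta$-stable components of $A$ and counting how many of them are non-rigid.

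Next I would classify those $\theta$-stable components. Using that $A$ is Schur-tame, together with the moduli-theoretic machinery for Schur-tame algebras, each $\M(C_i)^{ss}_\theta$ is either a point (when the generic $\theta$-stable module in $C_i$ is rigid, so $C_i$ contains a dense orbit) or a rational projective curve. For the algebra at hand I would sharpen "rational projective curve" to "$\PP^1$" by identifying the non-rigid $\theta$-stable modules explicitly: after using the relation $\alpha\beta=0$, which cuts the two-path $3\to2\to1$ and thereby kills any continuous family inside the triangle $\{1,2,3\}$, the surviving one-parameter families of bricks are forced through the $2$-in/$2$-out configuration at vertex $3$ and assemble into a single Kronecker-type pencil whose moduli space is $\PP^1$. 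This is the step where Ringel's description of the indecomposables, via \cite{Rin2013}, enters, and it is what pins down the mouth of the relevant family as $\PP^1$ rather than a singular rational curve.

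The crux, and the main obstacle, is the uniqueness statement: for a fixed weight $\theta$ there is at most one non-rigid $\theta$-stable dimension vector of $\theta$-slope zero. Concretely, I expect all the band-type (non-rigid) $\theta$-stable bricks of $A$ to be supported on scalar multiples of a single imaginary root $\delta$, with only $\delta$ itself carrying $\theta$-stable modules; hence in the generic decomposition above at most one $C_i$ is non-rigid, occurring as $C_i=D$ with $D$ the band component at $\delta$ (possibly with multiplicity $m_i=m$). Granting this, the product formula gives $\M(C)^{ss}_\theta\cong\operatorname{Sym}^{m}(\PP^1)\cong\PP^{m}$ when the band occurs, and a point otherwise; in either case $\M(C)^{ss}_\theta$ is a projective space.

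Establishing this uniqueness — that two distinct one-parameter families of bricks can never be simultaneously $\theta$-stable of slope zero — is the hard part, and is exactly where the special combinatorics of this wild-but-Schur-tame algebra must be used. I would attack it by a direct analysis of the brick families (reducing, via the relation, to the pencil at vertex $3$ and controlling it against the rigidifying maps $\alpha,\beta,\gamma$), possibly organized through covering theory or Ringel's explicit list; the phenomenon is parallel to the band analysis underlying Theorem~\ref{thm-main-1}, but here the finer structure forces the single projective factor that distinguishes this result from the products of projective spaces occurring in the gentle case.
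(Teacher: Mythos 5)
Your overall strategy---decompose $C$ into its $\theta$-stable pieces, identify each factor as a point or $\PP^1$, and show that at most one positive-dimensional factor can occur---is the same as the paper's, but two essential steps are missing. First, the ``crux'' you explicitly leave open (that two distinct one-parameter families of bricks cannot be simultaneously $\theta$-stable) is exactly the step the paper proves, and it does so not by a direct analysis of band families or covering theory but by an elementary sign argument: by Ringel's result every Schur $A$-module $M$ has $M(\alpha)=0$ or $M(\beta)=0$, i.e.\ lives on $\widetilde{\mathbb D}_4$ (vertex $2$ a sink) or on $\mathbb D_5$ (vertex $2$ a source); in the first case $S_2$ embeds properly into any $\theta$-stable module supported at vertex $2$, forcing $\theta(2)<0$, while in the second case $S_2$ is a proper quotient, forcing $\theta(2)>0$. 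Hence for a fixed $\theta$ with $\theta(2)\neq 0$ only one of the two types occurs among stable factors supported at vertex $2$, and since $\mathbb D_5$ is Dynkin and $\widetilde{\mathbb D}_4$ has a unique non-rigid Schur root $\delta$, at most one factor of the decomposition fails to be an orbit closure. Your proposal also leaves the case $\theta(2)=0$ untreated, where this sign argument is unavailable; the paper handles it separately, showing via the First Fundamental Theorem for $\GL(\dd(2))$ that $\dim_K\SI(A,\dd)_{l\theta}\leq 1$ for all $l\geq 1$, so that $\M(C)^{ss}_{\theta}$ is a point.

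Second, you assert the isomorphism $\M(C)^{ss}_{\theta}\cong\prod_i S^{m_i}(\M(C_i)^{ss}_{\theta})$ as if it followed formally from writing $C=\overline{C_1^{\oplus m_1}\oplus\cdots\oplus C_l^{\oplus m_l}}$, but this conflates the generic (Krull--Schmidt) decomposition with the $\theta$-stable decomposition $C=m_1\cdot C_1\pp\cdots\pp m_n\cdot C_n$; the latter only describes $\gr_{\theta}$ of the generic semi-stable point and does not by itself give the product formula. Theorem \ref{theta-stable-decomp-thm} and Proposition \ref{prop-reduction} additionally require normality of $C$ (and of $\overline{\bigoplus_i C_i^{\oplus m_i}}$) and the non-automatic containment $\bigoplus_i C_i^{\oplus m_i}\subseteq C$. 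The paper verifies normality by observing that every irreducible component of every $\module(A,\hh)$ is a determinantal variety $C(r_\alpha,r_\beta)$, and verifies the containment by a rank count along $\beta$ (using the separation property to force $\dd_i(2)=0$ whenever $C_i$ is of $\mathbb D_5$-type), which identifies $C$ with $\module(\widetilde{\mathbb D}_4,\dd)$ whenever a band factor is present. Without these verifications your displayed product formula, and hence the conclusion, is unjustified.
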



In Section \ref{sec-background}, we outline the pertinent notions related to bound quiver algebras, module varieties, and Schur-tame algebras. In Section \ref{sec:moduli-spaces}, we first review King's construction of moduli spaces of modules of algebras, and then prove a general reduction result (Proposition \ref{prop-reduction}) that allows one to break a moduli space of modules into products of smaller ones. The proofs of our main results, presented in Section \ref{sec-proofs}, rely on descriptions of the irreducible components of the module varieties of the algebras involved (see Section \ref{gentle-sec}) and the general reduction result from  Section \ref{sec:moduli-spaces}. 

\subsection*{Acknowledgements} We would like to thank Ryan Kinser for communicating the wild Schur-tame algebra in Theorem \ref{thm-main-2} to us. We are also thankful to Jan Schr{\"o}er for clarifying discussions on preprojective algebras. The second author was supported by NSF grant DMS-1101383.

\section{Background}\label{sec-background}
\subsection{Bound quiver algebras}\label{sect:thebasics} Let $Q=(Q_0,Q_1,t,h)$ be a finite quiver with vertex set $Q_0$ and arrow set $Q_1$. The two functions $t,h:Q_1 \to Q_0$ assign to each arrow $a \in Q_1$ its tail \emph{ta} and head \emph{ha}, respectively.

A representation $V$ of $Q$ over $K$ is a collection $(V(x),V(a))_{x\in Q_0, a\in Q_1}$ of finite-dimensional $K$-vector spaces $V(x)$, $x \in Q_0$, and $K$-linear maps $V(a): V(ta) \to V(ha)$, $a \in Q_1$. The dimension vector of a representation $V$ of $Q$ is the function $\ddim V : Q_0 \to \ZZ$ defined by $(\ddim V)(x)=\dim_{K} V(x)$ for $x\in Q_0$. The one-dimensional representation of $Q$ supported at vertex $x \in Q_0$ is denoted by $S_x$ and its dimension vector is denoted by $\ee_x$. By a dimension vector of $Q$, we simply mean a vector $\dd \in \ZZ_{\geq 0}^{Q_0}$.

Given two representations $V$ and $W$ of $Q$, we define a morphism $\varphi:V \rightarrow W$ to be a collection $(\varphi(x))_{x \in Q_0}$ of $K$-linear maps with $\varphi(x) \in \Hom_K(V(x), W(x))$ for each $x \in Q_0$, and such that $\varphi(ha)V(a)=W(a)\varphi(ta)$ for each $a \in Q_1$. We denote by $\Hom_Q(V,W)$ the $K$-vector space of all morphisms from $V$ to $W$. Let $V$ and $W$ be two representations of $Q$. We say that $V$ is a subrepresentation of $W$ if $V(x)$ is a subspace of $W(x)$ for each $x \in Q_0$ and $V(a)$ is the restriction of $W(a)$ to $V(ta)$ for each $a \in Q_1$. In this way, we obtain the abelian category $\rep(Q)$ of all representations of $Q$.

Given a quiver $Q$, its path algebra $KQ$ has a $K$-basis consisting of all paths (including the trivial ones), and the multiplication in $KQ$ is given by concatenation of paths. It is easy to see that any $KQ$-module defines a representation of $Q$, and vice-versa. Furthermore, the category $\module(KQ)$ of $KQ$-modules is equivalent to the category $\rep(Q)$. In what follows, we identify $\module(KQ)$ and $\rep(Q)$, and use the same notation for a module and the corresponding representation.

A two-sided ideal $I$ of $KQ$ is said to be \emph{admissible} if there exists an integer $L\geq 2$ such that $R_Q^L\subseteq I \subseteq R_Q^2$. Here, $R_Q$ denotes the two-sided ideal of $KQ$ generated by all arrows of $Q$. 

If $I$ is an admissible ideal of $KQ$, the pair $(Q,I)$ is called a \emph{bound quiver} and the quotient algebra $KQ/I$ is called the \emph{bound quiver algebra} of $(Q,I)$. Bound quiver algebras are as general as they can be. Indeed, up to Morita equivalence, any finite-dimensional algebra $A$ can be viewed as the bound quiver algebra of a bound quiver $(Q_{A},I)$, where $Q_{A}$ is the Gabriel quiver of $A$ (see \cite[Corollary I.6.10 and Theorem~II.3.7]{AS-SI-SK}). (Note that the ideal of relations $I$ is not uniquely determined by $A$.) We say that $A$ is an \emph{acyclic} algebra if its Gabriel quiver has no oriented cycles.

Fix a bound quiver $(Q,I)$, a finite generating set $\R$ of admissible relations of $I$, and let $A=KQ/I$ be its bound quiver algebra.  A representation $M$ of $A$ (or  $(Q,I)$) is just a representation $M$ of $Q$ such that $M(r)=0$ for all $r \in \R$. The category $\module(A)$ of finite-dimensional left $A$-modules is equivalent to the category $\rep(A)$ of representations of $A$. As before, we identify $\module(A)$ and $\rep(A)$, and make no distinction between $A$-modules and representations of $A$.  For each vertex $x \in Q_0$, we denote by $P_i$ the projective indecomposable cover of the simple $A$-module $S_x$.  For an $A$-module $M$, we denote by $\pdim M$ its projective dimension.  An $A$-module $M$ is called \emph{Schur} if $\End_A(M) \cong K$. The dimension vector of a Schur $A$-module is called a \emph{Schur root} of $A$

Assume now that $A$ has finite global dimension; this happens, for example, when $Q$ has no oriented cycles. Then the \emph{Euler form} of $A$ is the bilinear form  $\langle \langle \cdot, \cdot \rangle \rangle_{A} : \ZZ^{Q_0}\times \ZZ^{Q_0} \to \ZZ$ defined by
$$
\langle \langle \dd,\ee \rangle \rangle_{A}=\sum_{l\geq 0}(-1)^l \sum_{x,y\in Q_0}\dim_K \Ext^l_{A}(S_x,S_y)\dd(x)\ee(y).
$$
In fact, for any $A$-modules $M$ and $N$ which are $\dd$- and
$\ee$-dimensional, respectively, one has
$$
\langle \langle \dd,\ee \rangle \rangle_{A}=\sum_{l\geq 0}(-1)^l \dim_K \Ext^l_{A}(M,N).
$$

\subsection{Module varieties and their irreducible components}\label{sect:modvar} Let $\dd$ be a dimension vector of $A=KQ/I$ (or equivalently, of $Q$). The affine variety
$$
\module(A,\dd):=\{M \in \prod_{a \in Q_1} \Mat_{\dd(ha)\times \dd(ta)}(K) \mid M(r)=0, \forall r \in
\R \}
$$
is called the \emph{module/representation variety} of $\dd$-dimensional modules/representations of $A$. The affine space $\module(Q,\dd):= \prod_{a \in Q_1} \Mat_{\dd(ha)\times \dd(ta)}(K)$ is acted upon by the base change group $$\GL(\dd):=\prod_{x\in Q_0}\GL(\dd(x),K)$$ by simultaneous conjugation, i.e., for $g=(g(x))_{x\in Q_0}\in \GL(\dd)$ and $V=(V(a))_{a \in Q_1} \in \module(Q,\dd)$,  $g \cdot V$ is defined by $$(g\cdot V)(a)=g(ha)V(a) g(ta)^{-1}, \forall a \in Q_1.$$ 

It can be easily seen that $\module(A,\dd)$ is a $\GL(\dd)$-invariant closed subvariety of $\module(Q,\dd)$, and that the $\GL(\dd)-$orbits in $\module(A,\dd)$ are in one-to-one correspondence with the isomorphism classes of the $\dd$-dimensional $A$-modules. 

In general, $\module(A, \dd)$ does not have to be irreducible. Let $C$ be an irreducible component of $\module(A, \dd)$. We say that $C$ is \emph{indecomposable} if $C$ has a non-empty open subset of indecomposable modules. We say that $C$ is a \emph{Schur component} if $C$ contains a Schur module. Obviously, any Schur component is indecomposable. A dimension vector $\dd$ is called a \emph{generic root} of $A$ if $\module(A,\dd)$ has an indecomposable irreducible component.

Given a decomposition $\dd=\dd_1+\ldots +\dd_l$ where $\dd_i \in \ZZ^{Q_0}_{\geq 0}, 1 \leq i \leq l$, and $\GL(\dd_i)$-invariant constructible subsets $C_i\subseteq \module(A,\dd_i)$, $1 \leq i \leq l$, we denote by $C_1\oplus \ldots \oplus C_l$ the constructible subset of $\module(A,\dd)$ defined as:  
$$C_1\oplus \ldots \oplus C_l=\{M \in \module(A,\dd) \mid M\simeq \bigoplus_{i=1}^t M_i\text{~with~} M_i \in C_i, \forall 1 \leq i \leq l\}.$$
As shown by de la Pe{\~n}a in \cite[Section 1.3]{delaP} and Crawley-Boevey and Schr{\"o}er in \cite[Theorem~1.1]{C-BS}, any irreducible component of a module variety has a Krull-Schmidt type decomposition. Specifically, if $C$ is an irreducible component of $\module(A,\dd)$ then there are unique generic roots $\dd_1, \ldots, \dd_l$ of $A$ such that $\dd=\dd_1+\ldots +\dd_l$ and
$$
C=\overline{C_1\oplus \ldots \oplus C_l}
$$
for some indecomposable irreducible components $C_i\subseteq \module(A,\dd_i), 1 \leq i \leq l$. Moreover, the indecomposable irreducible components $C_i, 1 \leq i \leq l,$ are uniquely determined by this property. We call $C=\overline{C_1\oplus \ldots \oplus C_l}$ the \emph{generic decomposition of $C$}.

Conversely, if $C_i \subseteq \module(A,\dd_i)$, $1 \leq i \leq l$, are indecomposable irreducible components then $\overline{C_1\oplus \ldots \oplus C_l}$ is an irreducible component of $\module(A,\sum_{i=1}^l \dd_i)$ if and only if $\ext_A^1(C_i,C_j)=0$ for all $1 \leq i \neq j \leq l$ (see \cite[Theorem~1.2]{C-BS}). Recall that if $D$ and $E$ are two irreducible components then $\ext_A^1(D,E):=\min \{\dim_K \Ext^1_A(X,Y) \mid (X,Y) \in D \times E\}$.

\subsection{Schur-tame algebras} Following Bodnarchuk-Drozd \cite{BodDro}, we now introduce the class of Schur-tame algebras. 

For an $A-R$-bimodule $T$, where $R$ is a localization $K[t]_f$ of $K[t]$ by a polynomial $f$, the functor $T\otimes_{R} -:\module(R) \to \module(A)$ is said to be a \emph{Schur-embedding} if: $(1)$ $T\otimes_R N \simeq T\otimes_R N'$ implies $N \simeq N'$; and $(2)$ $N$ is Schur then so is $T\otimes_R N$. Here, $\module(R)$ denotes the category of finite-dimensional $R$-modules. Also, recall that the finite-dimensional Schur $R$-modules are of the form ${K[t] \over (t-\lambda)}$ with $\lambda \in K$ such that $f(\lambda) \neq 0$ (for more details, see \cite[Ch. XIX.2]{Sim-Sko-3}).

\begin{definition} 
An algebra $A$ is said to be \emph{Schur-tame} if for each dimension vector $\dd$ of $A$, there are finitely many localizations $R_i=K[t]_{f_i}$, $1 \leq i \leq n_{\dd}$, and bimodules $\leftsub{A}(T_1)_{R_1},$ $ \ldots,$ $\leftsub{A}(T_{n_{\dd}})_{R_{n_{\dd}}}$ such that:
\begin{enumerate}
\item each $T_i$ is a free right $R_i$-module of finite rank and the functor $T_i \otimes_{R_i}-$ is a Schur-embedding;
\item every $\dd$-dimensional Schur $A$-module, except possibly for finitely many isoclasses of modules, is of the form $T_i\otimes_{R_i} {K[t] \over (t-\lambda)}$ for some $\lambda \in K$ with $f_i(\lambda) \neq 0$ and $1 \leq i \leq n_{\dd}$.
\end{enumerate}
\end{definition}

First, let us prove:

\begin{lemma} Any tame algebra $A$ is Schur-tame.
\end{lemma}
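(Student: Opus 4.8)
The plan is to unwind the definition of tameness and exhibit the required Schur-embeddings directly from the one-parameter families that tameness provides. Recall that $A$ is \emph{tame} if for each dimension vector $\dd$ there are finitely many $A$-$K[t]$-bimodules $M_1,\dots,M_{r_\dd}$, each free of finite rank over $K[t]$, such that all but finitely many isoclasses of indecomposable $\dd$-dimensional $A$-modules are of the form $M_i\otimes_{K[t]}K[t]/(t-\lambda)$ for some $i$ and some $\lambda\in K$. My goal is to massage these families into families satisfying the Schur-tame axioms, where the base rings are allowed to be localizations $K[t]_f$ and the functors are required to be Schur-embeddings.

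The key observation I would exploit is that every $\dd$-dimensional Schur module is in particular indecomposable, so it already lies (up to finitely many exceptions) in one of the tame families $M_i\otimes_{K[t]}K[t]/(t-\lambda)$. Thus condition $(2)$ of Schur-tameness is essentially inherited, once we discard the finitely many non-generic members and absorb them into the ``finitely many isoclasses'' exception allowed by the definition. The real work is to arrange condition $(1)$: that each functor $T_i\otimes_{R_i}-$ be a \emph{Schur-embedding}, i.e.\ injective on isoclasses and sending Schur $R_i$-modules to Schur $A$-modules. First I would pass from $K[t]$ to a localization $R_i=K[t]_{f_i}$ in order to throw away the finitely many ``bad'' parameters $\lambda$: for each family $M_i$, the set of $\lambda$ for which $M_i\otimes K[t]/(t-\lambda)$ either fails to be Schur, or fails to be $\dd$-dimensional indecomposable, or coincides with a member of another family, is finite (this finiteness is where I would lean on genericity of the one-parameter families). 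Inverting a polynomial $f_i$ vanishing on exactly these bad $\lambda$ gives $T_i:=M_i\otimes_{K[t]}R_i$, which is free of finite rank over $R_i$, and by construction sends each simple $R_i$-module $K[t]/(t-\lambda)$ (with $f_i(\lambda)\neq 0$) to a Schur $A$-module, and does so injectively on isoclasses since distinct $\lambda$ yield non-isomorphic indecomposables in a genuine one-parameter family.

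\textbf{The main obstacle} I anticipate is verifying that the functors can be made genuinely injective on isoclasses globally, not just on the simple $R_i$-modules. A general finite-dimensional $R_i$-module is a direct sum of modules of the form $K[t]/(t-\lambda)^m$, so I must check that $T_i\otimes_{R_i}-$ distinguishes these and that it preserves the Schur property only on the genuinely simple ones---the definition of Schur-embedding only requires preservation of the \emph{Schur} $R_i$-modules, which are exactly the $K[t]/(t-\lambda)$ with $f_i(\lambda)\neq 0$, so preservation of indecomposability for higher $m$ is not needed, but injectivity on \emph{all} isoclasses is. Here I would use that $T_i$ is free over $R_i$: tensoring is exact and compatible with the $R_i$-module structure, so $T_i\otimes_{R_i}-$ is a faithful exact functor on $\module(R_i)$, and faithful exact functors reflect isomorphisms, giving injectivity on isoclasses. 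Assembling these pieces---the localization to remove finitely many bad parameters, the freeness to guarantee faithful exactness, and the fact that Schur modules are indecomposable hence captured by the tame families---yields the finitely many Schur-embeddings required by the definition, completing the proof that $A$ is Schur-tame.
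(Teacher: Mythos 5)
Your overall strategy --- localize each one-parameter family away from the finitely many parameters where the resulting module fails to be Schur --- is the same as the paper's. But there is a genuine gap in your verification of condition $(1)$, namely the injectivity on isoclasses. You argue that since $T_i$ is free over $R_i$, the functor $T_i\otimes_{R_i}-$ is faithful and exact, and that ``faithful exact functors reflect isomorphisms, giving injectivity on isoclasses.'' A faithful exact functor $F$ does reflect isomorphisms in the sense that $f$ is an isomorphism whenever $F(f)$ is, but that is not the statement you need: injectivity on isoclasses asks that $F(N)\cong F(N')$ forces $N\cong N'$, and an isomorphism $F(N)\to F(N')$ need not be of the form $F(f)$. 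Concretely, take $T=A\otimes_K K[t]$ with $t$ acting by right multiplication on the second factor; this is free of finite rank over $K[t]$ and the induced functor is faithful and exact, yet $T\otimes_{K[t]}K[t]/(t-\lambda)\cong A$ for every $\lambda$, so it collapses all the pairwise non-isomorphic simples. Freeness alone cannot give what you want.

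The paper sidesteps this by starting from a stronger input: following Dowbor--Skowro\'nski, one may choose the tame families so that each $T_i\otimes_{R_i}-$ is already a representation-embedding, in particular injective on isoclasses of \emph{all} finite-dimensional $R_i$-modules. Injectivity for the localized functor $T'_i\otimes_{R'_i}-$ is then inherited by restriction of scalars: $T'_i\otimes_{R'_i}N\cong T'_i\otimes_{R'_i}N'$ gives $T_i\otimes_{R_i}N\cong T_i\otimes_{R_i}N'$, hence $N\cong N'$. Your proof needs this (or an equivalent statement) as an explicit hypothesis; the bare Drozd-style definition of tameness you start from does not supply it, and neither does your appeal to ``genericity'' --- which is also doing unexamined work in your claim that distinct $\lambda$ yield non-isomorphic modules. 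Note finally that the finiteness of the set of bad parameters is obtained in the paper not from genericity but from the openness of the Schur locus, via upper semi-continuity of $M\mapsto\dim_K\End_A(M)$; you should make that step explicit as well.
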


\begin{proof} Let $\dd$ be a dimension vector of $A$. We know that there are finitely many localizations $R_1, \ldots, R_n$ of $K[t]$ and bimodules  $\leftsub{A}(T_1)_{R_1}, \ldots, \leftsub{A}(T_{n_{\dd}})_{R_{n}}$ satisfying the two properties above with ``Schur-embedding'' replaced by ``representation-embedding'' in $(1)$, and ``Schur'' replaced by ``indecomposable'' in $(2)$. Following closely Dowbor-Skowro{\'n}ski's arguments in \cite{DowSko2}, we explain how to modify the $R_i$'s and $T_i$'s in order to get the desired Schur-embeddings that almost parametrize the $\dd$-dimensional Schur $A$-modules.

For each $1 \leq i \leq n$, write $R_i=K[t]_{f_i}$ and let $\mathcal U_i=(\mathbb A^1)_{f_i}$ be the principal open subset corresponding to $f_i$. Consider the morphism of varieties $\varphi_i:\mathcal U_i \to \module(A,\dd)$ induced by $T_i$; in particular, $\varphi_i(\lambda)\simeq T_i\otimes_{R_i} {K[t] \over (t-\lambda)}, \forall \lambda \in \mathcal U_i$. Next, note that the set $\Schur(A,\dd)$ consisting of all Schur $A$-modules in $\module(A,\dd)$ is an open subvariety due to the upper semi-continuity of the function $M \to \dim_K \End_A(M)$. Therefore, $\varphi_i^{-1}(\module(A,\dd) \setminus \Schur(A,\dd))$ must be a finite subset of $\mathcal U_i$; denote this subset by $\mathcal S_i$. Let:
\begin{itemize}
\item $f'_i:=\prod_{\alpha \in \mathcal S_i} (t-\alpha)$;
\item $R'_i:=(R_i)_{f'_i}$;
\item $T'_i:=T_i \otimes_{R_i} R'_i$.
\end{itemize}  
It is now clear that the set of the isoclasses of all modules of the form $T'_i \otimes_{R'_i} {K[t]\over (t-\lambda)}$, with $f'_i(\lambda) \neq 0$, is precisely the set of the isoclasses of the Schur $A$-modules of the form $T_i \otimes_{R_i} {K[t]\over (t-\lambda)}$, with $f_i(\lambda) \neq 0$; in particular, the functor $T'_i\otimes_{R'_i}-$ preserves Schur modules. Moreover, if $N$ and $N'$ are to finite-dimensional $R'_i$-modules such that $T'_i\otimes_{R'_i} N \simeq T'_i \otimes_{R'_i} N'$ then $T_i \otimes_{R_i} N \simeq T_i \otimes_{R_i} N'$ which implies that $N \simeq N'$ since $T_i \otimes_{R_i}-$ is a representation-embedding.

At this point, it is clear that the new localizations $R'_1, \ldots, R'_n$ and bimodules $T'_1, \ldots, T'_n$ satisfy the desired properties. We conclude that $A$ is Schur-tame. 
\end{proof}

\begin{example} 
\begin{enumerate}
\item [(i)] It has been pointed out to us by Schr{\"o}er that the preprojective algebra of a Dynkin quiver has only finitely many Schur modules in each dimension vector. That is to say, it is Schur-representation-finite and, in particular, Schur-tame.
\item [(ii)] For each integer $n \geq 4$, consider the bound quiver algebra given by
$$
Q=\vcenter{\hbox{  
\begin{tikzpicture}[point/.style={shape=circle, fill=black, scale=.3pt,outer sep=3pt},>=latex]
   \node[point,label={below:$1$}] (1) at (0,0) {};
   \node[point,label={above:$2$}] (2) at (1.5,1) {};
   \node[point,label={below:$3$}] (3) at (3,0) {};
   \node[point,label={below:$4$}] (4) at (5,0) {};
   \node[point,label={below:$n-1$}] (5) at (7,0) {};
   \node[point,label={below:$n$}] (6) at (9,0) {};
    
   \draw[dotted] (5.85,0)--(6.1,0);   
      
  \path[->]
   (2) edge  node[midway, above] {$\alpha$} (1)
   (3) edge  node[midway, above] {$\beta$} (2)   
   (3) edge  (1)
   (4) edge  (3)
   (6) edge  (5);
   \end{tikzpicture} 
}} \text{~~~and~~~~} I=\langle \alpha\beta \rangle. 
$$
It was proved by Ringel in \cite{Rin2013} that these algebras are Schur-representation-finite, in particular Schur-tame, and that they are wild for $n \geq 9$.

\item [(iii)] Consider the algebra $A=KQ/I$ from Theorem \ref{thm-main-2}:
 $$Q=\vcenter{\hbox{  
\begin{tikzpicture}[point/.style={shape=circle, fill=black, scale=.3pt,outer sep=3pt},>=latex]
   \node[point,label={left:$1$}] (1) at (0,0) {};
   \node[point,label={above:$2$}] (2) at (1.5,1) {};
   \node[point,label={below:$3$}] (3) at (3,0) {};
   \node[point,label={above:$5$}] (5) at (4.5,1) {};
   \node[point,label={below:$4$}] (4) at (5,0) {};   
  \path[->]
   (2) edge  node[midway, above] {$\alpha$} (1)
   (3) edge  node[midway, above] {$\beta$} (2)   
   (3) edge  (1)
   (5) edge  (3)
   (4) edge  (3);
   \end{tikzpicture} 
}} \text{~~~and~~~~} I=\langle \alpha\beta \rangle. 
$$
It follows from Ringel's arguments in \cite{Rin2013} that $A$ is wild and Schur-tame. Specifically, to prove the wildness of $A$, one invokes a result of Mart{\'i}nez-Villa \cite{Martinez} to conclude that the non-simple indecomposable $A$-modules are in bijective correspondence with the non-simple indecomposable modules over the path algebra of the wild quiver obtained from $Q$ by splitting the vertex $2$ into two other vertices. Therefore, $A$ must be wild. As for the Schur-tameness of $A$, Ringel showed that for any Schur $A$-module $M$, either $M(\alpha)=0$ or $M(\beta)=0$. Consequently, the Schur modules for $A$ come from those for a $\tilde{\mathbb D}_4$ or $\mathbb D_5$ quiver. So, $A$ is Schur-tame.  
\end{enumerate}
\end{example}

For the remainder of this section, we assume that $A$ is a Schur-tame algebra and let $\dd$ be a Schur root of $A$. Denote by $\Sch(A,\dd)$ the open subvariety of $\module(A,\dd)$ consisting of all $\dd$-dimensional Schur $A$-modules. 

We know that there are finitely many principal open subsets $\mathcal U_i\subseteq \mathbb A^1=K$ and regular morphisms $\varphi_i:\mathcal U_i \to \module(A,\dd)$, $1 \leq i \leq n$, such that:
\begin{itemize}
\item for each $1 \leq i \leq n$, $\varphi_i(\mathcal U_i) \subset \Sch(A,\dd)$, and if $\varphi_i(\lambda_1) \simeq \varphi_i(\lambda_2)$ as $A$-modules then $\lambda_1=\lambda_2$;
\item all modules in $\Sch(A,\dd)$, except possibly finitely many isoclasses, belong to $\bigcup_{i=1}^n \mathcal F_i$, where each $\mathcal F_i$ is the closure of the image of the  action morphism $\GL(\dd) \times \mathcal U_i \to \module(A,\dd)$ that sends $(g,\lambda)$ to $g\cdot \varphi_i(\lambda)$, i.e. $\mathcal F_i=\overline{\bigcup_{\lambda \in \mathcal U_i} \GL(\dd)\varphi_i(\lambda)}$.
\end{itemize}
(We call $(\mathcal U_i,\varphi_i)$, $1 \leq i \leq n$, \emph{parameterizing pairs} for $\Sch(A,\dd)$.)

Consequently, we have that $$\overline{\Sch(A,\dd)}=\bigcup_{i=1}^n \mathcal F_i \cup \bigcup_{j=1}^l\overline{\GL(\dd)M_j}$$ for some $M_1, \ldots, M_l \in \Sch(A,\dd)$. 

Now, let $C \subseteq \module(A,\dd)$ be a Schur irreducible component; in particular, $C$ is an irreducible component of $\overline{\Sch(A,\dd)}$. From the discussion above, it follows that either:
\begin{itemize}
\item $C=\mathcal F_i$ for some $1 \leq i \leq n$ or;
\item $C=\overline{\GL(\dd)M_j}$ for some $1 \leq j \leq l$.
\end{itemize}

We have the following useful dimension count. For a proof, one can follow verbatim the arguments in \cite[Lemma 3]{CC13}.

\begin{lemma}\label{dim-count-irr-comp} Let $A$ be a Schur-tame algebra, $\dd$ a Schur root of $A$, and $C \subseteq \module(A,\dd)$ a Schur irreducible component. Then $\dim \GL(\dd)-\dim C \in \{0,1\}$, with $\dim \GL(\dd)=\dim C$ precisely when $C$ is not an orbit closure.
\end{lemma}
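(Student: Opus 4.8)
The plan is to reduce to a dimension count on the two possible types of Schur irreducible components identified in the discussion preceding the lemma. Recall that for a Schur-tame algebra $A$ and Schur root $\dd$, any Schur irreducible component $C \subseteq \module(A,\dd)$ is either of the form $\mathcal F_i = \overline{\bigcup_{\lambda \in \mathcal U_i} \GL(\dd)\varphi_i(\lambda)}$ for one of the parameterizing pairs, or an orbit closure $C=\overline{\GL(\dd)M}$ for some Schur module $M$. I would treat these two cases separately, establishing in each case that $\dim \GL(\dd)-\dim C$ lands in $\{0,1\}$ and pinning down exactly when the value is $0$.

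First I would handle the orbit-closure case. If $C=\overline{\GL(\dd)M}$ with $M$ Schur, then $\dim C = \dim \GL(\dd) M = \dim \GL(\dd) - \dim \Stab_{\GL(\dd)}(M)$. Since $M$ is a Schur module, $\End_A(M)\cong K$, so the stabilizer of $M$ in $\GL(\dd)$ consists precisely of the invertible elements of $\End_A(M)$, which is the one-dimensional group $K^* = \Stab_{\GL(\dd)}(M)$. Hence $\dim \Stab_{\GL(\dd)}(M)=1$ and $\dim \GL(\dd) - \dim C = 1$; in this case $C$ is an orbit closure, consistent with the claim that the difference is $1$ only when $C$ is \emph{not} an orbit closure being vacuously compatible here (the orbit-closure case yields the value $1$). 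The key point is that the scalars always lie in the stabilizer and the Schur condition forces the stabilizer to be exactly the scalars.

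Next I would treat the case $C=\mathcal F_i$. Here $\mathcal F_i$ is the closure of the image of the action morphism $\GL(\dd)\times \mathcal U_i \to \module(A,\dd)$, $(g,\lambda)\mapsto g\cdot\varphi_i(\lambda)$, where $\mathcal U_i$ is a one-dimensional principal open subset of $\mathbb A^1$ and $\varphi_i$ is injective on isoclasses. Since $\varphi_i(\lambda)$ is Schur for each $\lambda$, each individual orbit $\GL(\dd)\varphi_i(\lambda)$ has dimension $\dim \GL(\dd)-1$ by the stabilizer computation above. The distinct values of $\lambda \in \mathcal U_i$ give non-isomorphic modules, hence genuinely distinct orbits sweeping out one additional parameter direction, so $\dim \mathcal F_i = (\dim \GL(\dd)-1)+1 = \dim \GL(\dd)$. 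Thus $\dim \GL(\dd)-\dim C = 0$, and in this case $C$ properly contains any single orbit closure (the one-parameter family is not a single orbit), so $C$ is not an orbit closure, matching the claim exactly.

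The main obstacle I anticipate is making rigorous the claim that the one-parameter family $\mathcal F_i$ has dimension exactly one more than the generic orbit, i.e., that the map $(g,\lambda)\mapsto g\cdot \varphi_i(\lambda)$ has generic fiber dimension equal to $\dim \GL(\dd)-1$ rather than collapsing the $\lambda$-direction. This requires that for generic $\lambda$ the orbits $\GL(\dd)\varphi_i(\lambda)$ for distinct $\lambda$ do not accidentally coincide and that the family is not contained in the closure of a single orbit; the injectivity-on-isoclasses property of $\varphi_i$ is what rules out the degenerate collapse, and upper semicontinuity of fiber dimension then gives the generic count. Since the statement explicitly says the proof follows verbatim the argument in \cite[Lemma 3]{CC13}, I would structure the dimension estimate to mirror that reference, invoking the stabilizer computation and the parameterizing-pair injectivity as the two inputs, and conclude that the extremal value $\dim \GL(\dd)=\dim C$ occurs precisely in the non-orbit-closure case $C=\mathcal F_i$.
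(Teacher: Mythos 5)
Your proposal is correct and follows essentially the same route as the paper, which simply defers to the dimension count of \cite[Lemma 3]{CC13}: you use the dichotomy (orbit closure of a Schur module versus a one-parameter family $\mathcal F_i$) set up in the discussion preceding the lemma, the fact that the stabilizer of a Schur module is exactly the scalars $K^*$ so each Schur orbit has dimension $\dim\GL(\dd)-1$, and the injectivity of $\varphi_i$ on isoclasses to see that every fiber of $(g,\lambda)\mapsto g\cdot\varphi_i(\lambda)$ is one-dimensional, giving $\dim\mathcal F_i=\dim\GL(\dd)$. The only point worth tightening is the final step: $\mathcal F_i$ cannot be an orbit closure simply because every orbit has dimension at most $\dim\GL(\dd)-1$ (the scalars stabilize everything), which is cleaner than arguing about proper containment of orbits.
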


\begin{rmk} We point out that a strictly wild algebra $A$ can not be Schur-tame. Indeed, it follows from \cite[Section 1.5]{delaP} that for any positive integer $n$, there exist a Schur root $\dd$ of $A$ and a Schur irreducible component $C\subseteq \module(A,\dd)$ such that $\dim \GL(\dd)-\dim C \leq -n^2$. This inequality combined with Lemma \ref{dim-count-irr-comp} shows that $A$ is not Schur-tame. 
\end{rmk}


\section{Theta-stable decompositions and moduli spaces}\label{sec:moduli-spaces}
Let $A=KQ/I$ be an algebra and let $\dd \in \ZZ^{Q_0}_{\geq 0}$ be a dimension vector of $A$. Let us consider the subgroup $\SL(\dd):=\prod_{x \in Q_0}\SL(\dd(x),K)$ of $\GL(\dd)$ and its action on $K[\module(A,\dd)]$. The resulting ring of semi-invariants $\SI(A,\dd):=K[\module(A,\dd)]^{\SL(\dd)}$ has a weight space decomposition over the group $X^{\star}(\GL(\dd))$ of rational characters of $\GL(\dd)$:
$$\SI(A,\dd)=\bigoplus_{\chi \in X^\star(\GL(\dd))}\SI(A,\dd)_{\chi}.
$$
For each character $\chi \in X^{\star}(\GL(\dd))$, $$\SI(A,\dd)_{\chi}=\lbrace f \in K[\module(A,\dd)] \mid g f= \chi(g)f \text{~for all~}g \in \GL(\dd)\rbrace$$ is called the \key{space of semi-invariants} on $\module(A,\dd)$ of \key{weight} $\chi$. For a $\GL(\dd)$-invariant closed subvariety $C \subseteq \module(A,\dd)$, we similarly define the ring of semi-invariants $\SI(C):=K[C]^{\SL(\dd)}$, and the space $\SI(C)_{\chi}$ of semi-invariants of weight $\chi$.

Note that any $\theta \in \ZZ^{Q_0}$ defines a rational character $\chi_{\theta}:\GL(\dd) \to K^*$ by 
\begin{equation}
\chi_{\theta}((g(x))_{x \in Q_0})=\prod_{x \in Q_0}\det g(x)^{\theta(x)}.
\end{equation}
In this way, we identify $\ZZ ^{Q_0}$ with $X^{\star}(\GL(\dd))$ whenever $\dd$ is a sincere dimension vector. In general, we we have the natural epimorphism $\ZZ^{Q_0} \to X^{\star}(\GL(\dd))$. We also refer to the rational characters of $\GL(\dd)$ as (integral) weights of $A$ (or $Q$).

Following King \cite{K}, an $A$-module $M$ is said to be \emph{$\theta$-semi-stable} if $\theta(\ddim M)=0$ and $\theta(\ddim M')\leq 0$ for all submodules $M' \leq M$. We say that $M$ is \emph{$\theta$-stable} if $M$ is non-zero, $\theta(\ddim M)=0$, and $\theta(\ddim M')<0$ for all submodules $\{0\} \neq M' < M$. A \key{$\theta$-polystable} $A$-module is defined to be a direct sum of $\theta$-stable $A$-modules. The full subcategory $\module(A)^{ss}_{\theta}$ consisting of the $\theta$-semi-stable $A$-modules is an exact abelian subcategory of $\module(A)$ which is closed under extensions and whose simple objects are precisely the $\theta$-stable modules. Moreover, $\module(A)^{ss}_{\theta}$ is Artinian and Noetherian, and hence every $\theta$-semi-stable $A$-module $M$ has a Jordan-H{\"o}lder filtration in $\module(A)^{ss}_{\theta}$; the direct sum of the factors of such a filtration of $M$ is a $\theta$-polystable $A$-module which, up to isomorphism, is independent of the Jordan-H{\"o}lder filtration. 

Now, let us consider the (possibly empty) open subsets
$$\module(A,\dd)^{ss}_{\theta}=\{M \in \module(A,\dd)\mid M \text{~is~}
\text{$\theta$-semi-stable}\}$$
and $$\module(A,\dd)^s_{\theta}=\{M \in \module(A,\dd)\mid M \text{~is~}
\text{$\theta$-stable}\}$$
of $\dd$-dimensional $\theta$(-semi)-stable $A$-modules. Using methods from Geometric Invariant Theory, King showed in \cite{K} that the projective variety
$$
\M(A,\dd)^{ss}_{\theta}:=\Proj(\bigoplus_{n \geq 0}\SI(A,\dd)_{n\theta})
$$
is a GIT-quotient of $\module(A,\dd)^{ss}_{\theta}$ by the action of $\PGL(\dd)$ where $\PGL(\dd)=\GL(\dd)/T_1$ and $T_1=\{(\lambda \Id_{\dd(x)})_{x \in Q_0} \mid \lambda \in k^*\} \leq \GL(\dd)$. Moreover, there is a (possibly empty) open subset $\M(A,\dd)^s_{\theta}$ of $\M(A,\dd)^{ss}_{\theta}$ which is a geometric quotient of $\module(A,\dd)^s_{\theta}$ by $\PGL(\dd)$. We say that $\dd$ is a \emph{$\theta$-(semi-)stable dimension vector} of $A$ if $\module(A,\dd)^{(s)s}_{\theta} \neq \emptyset$. 

For a $\GL(\dd)$-invariant closed subvariety $C \subseteq \module(A,\dd)$, we similarly define $C^{ss}_{\theta}, C^s_{\theta}$, $\M(C)^{ss}_{\theta}$, and $\M(C)^s_{\theta}$. Note that if $\pi:\module(A,\dd)^{ss}_{\theta} \to \M(A,\dd)^{ss}_{\theta}$ is the quotient morphism then $\M(C)^{ss}_{\theta}$ is precisely $\pi(C^{ss}_{\theta})$. We say that $C$ is a \emph{$\theta$-(semi-)stable subvariety} if $C^{(s)s} \neq \emptyset$. It was proved by King that the closed points of $\M(C)^{ss}_{\theta}$ correspond bijectively to the $\theta$-polystable $A$-modules in $C$ and that, for any $M \in C^{ss}_{\theta}$, $\overline{\GL(\dd)M}$ contains a unique, up to isomorphism, $\theta$-polystable $A$-module; in fact, this $\theta$-polystable module, which we denote by $\gr_{\theta}(M)$, is nothing else but (isomorphic to) the direct sum of the factors of a Jordan-H{\"o}lder filtration of $M$ in $\module(A)^{ss}_{\theta}$. Furthermore, the $\GL(\dd)$-orbit of $\gr_{\theta}(M)$ is closed in $C^{ss}_{\theta}$, being the unique such closed orbit contained in $\overline{\GL(\dd)M} \cap C^{ss}_{\theta}$, and $\pi(M)=\pi(\gr_{\theta}(M))$.

We now come to the key concept of this section. Let $C$ be a $\GL(\dd)$-invariant irreducible closed subvariety of $\module(A,\dd)$. Assume that $C$ is $\theta$-semi-stable and let $\dd_1, \ldots, \dd_l$ be $\theta$-stable dimension vectors of $A$ with $\sum_{i=1}^l \dd_i=\dd$. Let $C_i \subseteq \module(A,\dd_i)$, for $1 \leq i \leq l$, be a $\theta$-stable $\GL(\dd_i)$-invariant closed subvariety. We write
$$
C=C_1\pp \ldots \pp C_l,
$$
to mean that for the generic module $M \in C^{ss}_{\theta}$, $\gr_{\theta}(M)$ is (isomorphic to a module) in $C^s_{1, \theta}\oplus \ldots \oplus C^s_{l, \theta}$. We call such a decomposition of $C$, whenever it exists, a \key{$\theta$-stable decomposition}. 

\begin{remark} It follows from the work of Bobi{\'n}ski and Skowro{\'n}ski \cite{BS1} that for a tame quasi-tilted algebra, any $\theta$-semi-stable irreducible component is $\theta$-well-behaved (in the sense of \cite{CC13}); in particular, it has a unique $\theta$-stable decomposition (for details, see \cite{CC13}). The same holds for acyclic gentle algebras (see Section \ref{gentle-sec} or \cite{CC13}). 
\end{remark}

Now, we are ready to state the following reduction theorem from \cite[Theorem 1.4]{CC10}:

\begin{theorem}\label{theta-stable-decomp-thm} Let $A$ be an algebra, $\dd$ a dimension vector of $A$, and $\theta$ an integral weight of $A$. Let $C$ be a normal $\GL(\dd)$-invariant closed subvariety of $\module(A,\dd)$ that admits a $\theta$-stable decomposition:
$$C=m_1\cdot C_1 \pp \ldots \pp m_l\cdot C_l,$$
where $m_1, \ldots, m_l \geq 1$, $C_i \subseteq \module(A,\dd_i)$, $1 \leq i \leq l$, are $\theta$-stable irreducible components, and $\dd_i\neq \dd_j$ for all $1 \leq i\neq j \leq l$. Furthermore, assume that $\bigoplus_{i=1}^l C_i^{\oplus m_i} \subseteq C$. Then: 
$$
\mathcal{M}(C)^{ss}_{\theta} \cong S^{m_1}(\mathcal{M}(C_1)^{ss}_{\theta}) \times \ldots \times S^{m_l}(\mathcal{M}(C_l)^{ss}_{\theta}).
$$
\end{theorem}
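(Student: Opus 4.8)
The plan is to realize the asserted isomorphism as a single morphism built from the direct-sum operation on representations, and then to promote it to an isomorphism by a normality argument. First I would fix a decomposition of the underlying graded vector space compatible with $\sum_{i=1}^l m_i\dd_i=\dd$, so that block-diagonal direct sum of representations defines a morphism of varieties
$$
\Phi \colon \prod_{i=1}^l \bigl((C_i)^{ss}_{\theta}\bigr)^{m_i} \longrightarrow \module(A,\dd), \qquad \bigl((M_{i,j})_j\bigr)_i \longmapsto \bigoplus_{i=1}^l \bigoplus_{j=1}^{m_i} M_{i,j}.
$$
By the hypothesis $\bigoplus_{i}C_i^{\oplus m_i}\subseteq C$ the image lies in $C$, and since $\module(A)^{ss}_{\theta}$ is closed under direct sums, it lands in $C^{ss}_{\theta}$. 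Composing with the quotient $\pi_C\colon C^{ss}_{\theta}\to \M(C)^{ss}_{\theta}$ gives a morphism that is invariant under the group $H:=\prod_{i=1}^l\bigl(\GL(\dd_i)^{m_i}\rtimes S_{m_i}\bigr)$, where each $\GL(\dd_i)^{m_i}$ acts by base change on the blocks (killed by $\pi_C$, since $\oplus$ is taken up to isomorphism) and each $S_{m_i}$ permutes the $m_i$ identical blocks (harmless by commutativity of $\oplus$). Because GIT quotients commute with products of groups and with passage to the symmetric power, the quotient of the source by $H$ is exactly $\prod_{i=1}^l S^{m_i}(\M(C_i)^{ss}_{\theta})$, and the universal property of the categorical quotient lets $\pi_C\circ\Phi$ descend to a morphism
$$
\bar\Phi\colon \prod_{i=1}^l S^{m_i}(\M(C_i)^{ss}_{\theta})\longrightarrow \M(C)^{ss}_{\theta}.
$$

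Next I would show that $\bar\Phi$ is birational. Both sides are irreducible (as $C$ and each $C_i$ is), so it suffices to find a dense open set over which $\bar\Phi$ is bijective. On the source take the locus where, for each $i$, the chosen $m_i$ points of $\M(C_i)^{ss}_{\theta}$ are pairwise distinct classes of $\theta$-stable modules; this is dense, and its image consists of modules $\bigoplus_{i,j}N_{i,j}$ with the $N_{i,j}$ pairwise non-isomorphic $\theta$-stable (distinctness across different $i$ being automatic from $\dd_i\neq\dd_j$). By the definition of the decomposition $C=m_1\cdot C_1\pp\cdots\pp m_l\cdot C_l$, the generic $M\in C^{ss}_{\theta}$ has $\gr_{\theta}(M)$ of precisely this form, so $\bar\Phi$ is dominant; and by uniqueness of the decomposition into $\theta$-stable summands (Krull--Schmidt in $\module(A)^{ss}_{\theta}$) together with $\dd_i\neq\dd_j$, the unordered data $(N_{i,j})$ is recovered from $\bigoplus_{i,j}N_{i,j}$, so $\bar\Phi$ is injective there. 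Hence $\bar\Phi$ is birational.

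Finally I would prove that $\bar\Phi$ is a finite morphism and then invoke normality. It is proper, being a morphism of projective varieties (each moduli space is projective, and symmetric powers and products of projective varieties are projective). It is quasi-finite, since the fiber over a class $[N]$ consists of the finitely many ways of partitioning the finite multiset of $\theta$-stable summands of $N$ into groups of total dimension vectors $\dd_1,\ldots,\dd_l$; thus $\bar\Phi$ is finite. Because $C$ is normal and $\charac K=0$, the GIT quotient $\M(C)^{ss}_{\theta}$ is again normal, and a finite birational morphism onto a normal variety is an isomorphism (the coordinate ring of the source is integral over, and shares the fraction field of, the integrally closed coordinate ring of the target). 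This gives that $\bar\Phi$ is an isomorphism.

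I expect the main obstacle to be the finiteness-and-birationality package of the last two steps. One must check that no accidental identifications occur over the generic locus, where the hypothesis $\dd_i\neq\dd_j$ is essential, and that the fibers remain finite; it is then normality of $C$, used only at the very end via Zariski's main theorem, that rules out the potentially pathological identifications on the strictly-semi-stable boundary \emph{without} having to analyze the polystable strata of $\M(C)^{ss}_{\theta}$ by hand.
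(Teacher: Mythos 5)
First, a point of reference: the paper does not prove this statement at all --- it is quoted from \cite[Theorem 1.4]{CC10} --- so there is no in-paper argument to compare yours against line by line. That said, your overall strategy (descend the block-diagonal direct-sum morphism to a map $\bar\Phi\colon \prod_{i=1}^l S^{m_i}(\M(C_i)^{ss}_{\theta})\to\M(C)^{ss}_{\theta}$, prove it is finite and birational, and conclude using normality of the GIT quotient of the normal variety $C$) is the natural one and is exactly the mechanism the paper itself deploys in the proof of Proposition \ref{prop-reduction}, where a bijective morphism onto a normal variety in characteristic zero is promoted to an isomorphism. The construction of $\bar\Phi$ via the universal property of the categorical quotient, the identification of fibers with partitions of the multiset of stable constituents (Krull--Schmidt for semisimple objects of $\module(A)^{ss}_{\theta}$), properness, and the final ``finite $+$ birational $+$ normal target'' step are all sound.

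There is, however, one genuine gap, in the birationality step. Your dense open set is the locus where, for each $i$, the $m_i$ chosen points of $\M(C_i)^{ss}_{\theta}$ are \emph{pairwise distinct} stable classes. If some $C_i$ with $m_i\geq 2$ is the closure of a single $\theta$-stable orbit --- so that $\M(C_i)^{ss}_{\theta}$ is a single point --- this locus is empty and the argument as written collapses. This is not a hypothetical corner case: the theorem is invoked in the paper in precisely this situation (in the proof of Theorem \ref{thm-main-2}, the case where all the $C_i$ are orbit closures, possibly with $m_i\geq 2$, is handled by a direct appeal to Theorem \ref{theta-stable-decomp-thm}). The repair is easy: impose distinctness only for those $i$ with $\dim\M(C_i)^{ss}_{\theta}\geq 1$; generic injectivity over the resulting (now genuinely dense) open set still follows from uniqueness of the stable constituents of a polystable module together with $\dd_i\neq\dd_j$, since the point-factors contribute a fixed module $M_i^{\oplus m_i}$ carrying no information to lose. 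But as stated your proof does not cover all configurations allowed by --- and actually needed from --- the theorem, so you should add this case distinction explicitly.
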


Note that this reduction result allows us to ``break'' a moduli space of modules into smaller ones which are easier to handle, especially in the Schur-tame case. 

The next result is a strengthening of the reduction Theorem \ref{theta-stable-decomp-thm} in that it allows us to get rid of the orbit closures that occur in a $\theta$-stable decomposition. It plays a crucial role in proving Theorems \ref{thm-main-1} and \ref{thm-main-2}.

\begin{prop}\label{prop-reduction} Let $A$ be an algebra, $\dd$ a dimension vector of $A$, and $\theta$ an integral weight of $A$. Let $C$ be a $\theta$-semi-stable irreducible component of $\module(A,\dd)$ such that $\M(C)^{ss}_{\theta}$ is an irreducible component of $\M(A,\dd)^{ss}_{\theta}$. Assume that:
\begin{enumerate} 
\item $C$ is normal;
\item $C$ admits a $\theta$-stable decomposition of the form:
$$C=m_1\cdot C_1 \pp \ldots \pp m_l\cdot C_l \pp C_{l+1} \ldots \pp C_n ,$$
where $m_1, \ldots, m_l \geq 1$, $C_i \subseteq \module(A,\dd_i)$, $1 \leq i \leq n$, are $\theta$-stable irreducible components with $C_{l+1}, \ldots C_n$ orbit closures, and $\dd_i\neq \dd_j$ for all $1 \leq i\neq j \leq l$;
\item $C':=\overline{\bigoplus_{i=1}^l C_i^{\oplus m_i}}$ is a normal subvariety of $\module(A, \sum_{i=1}^l m_i \dd_i)$. 
\end{enumerate}
Then:
$$
\mathcal{M}(C)^{ss}_{\theta} \cong S^{m_1}(\mathcal{M}(C_1)^{ss}_{\theta}) \times \ldots \times S^{m_l}(\mathcal{M}(C_l)^{ss}_{\theta}).
$$

\begin{proof} We know that $C_j=\overline{\GL(\dd_j)M_j}$ with $M_j \in \module(A,\dd_j)^s_{\theta}$ for $l+1 \leq j \leq n$. Set $M_0=M_{l+1} \oplus \ldots \oplus M_n$ and note that $M_0$ is $\theta$-polystable. 

Let $\pi:\module(A,\dd)^{ss}_{\theta} \to \M(A,\dd)^{ss}_{\theta}$ and $\pi':(C')^{ss}_{\theta} \to \M(C')^{ss}_{\theta}$ be the quotient morphisms. Furthermore, consider the morphism $\varphi:(C')^{ss}_{\theta} \to \module(A,\dd)^{ss}_{\theta}$ defined by $\varphi(X)=X\oplus M_0$, for all $X \in (C')^{ss}_{\theta}$. From the universal property of the GIT quotient $\M(C')^{ss}_{\theta}$, we get the commutative diagram:
$$
\vcenter{\hbox{  
\begin{tikzpicture}
  \matrix (m) [matrix of math nodes,row sep=3em,column sep=4em,minimum width=2em]
  {
     (C')^{ss}_{\theta} & \M(C')^{ss}_{\theta} \\
     \module(A,\dd)^{ss}_{\theta} &  \\
     \M(A,\dd)^{ss}_{\theta} & \\};
  \path[-stealth]
    (m-1-1) edge node [above] {$\pi'$} (m-1-2)
    (m-1-1) edge node [left] {$\varphi$} (m-2-1)
    (m-2-1) edge node [left] {$\pi$} (m-3-1)
    (m-1-2) edge [dashed, ->] node [right] {$f$} (m-3-1);
\end{tikzpicture}
}}
$$
where $f:\M(C')^{ss}_{\theta} \to \M(A,\dd)^{ss}_{\theta}$ is the
morphism of varieties defined so that $f(\pi'(X))=\pi(X \oplus M_0)$
for all $X \in (C')^{ss}_{\theta}$. Let us denote $\Ima(f)$ by $Y$. We
claim that $Y=\M(C)^{ss}_{\theta}$. Indeed, the $\theta$-stable
decomposition of $C$ simply says that for a generic point $\tilde{X} \in C^{ss}_{\theta}$, $\gr_{\theta}(\tilde{X})$ is of the form $X\oplus M_0$ for some $X \in (C')^{ss}_{\theta}$. So, the generic point of $\M(C)^{ss}_{\theta}$ is of the form $f(\pi'(X)) \in Y$ and hence $\M(C)^{ss}_{\theta} \subseteq Y$. But this clearly implies our claim since $Y$ is irreducible and $\M(C)^{ss}_{\theta}$ is assumed to be an irreducible component of $\M(A,\dd)^{ss}_{\theta}$. 

In what follows, we show that $f:\M(C')^{ss}_{\theta} \to \M(C)^{ss}_{\theta}=Y$ is an isomorphism of varieties. First, let us check that $f$ is bijective. Since $f$ is surjective, we proceed with checking the injectivity of $f$. Let $x,y \in \M(C')^{ss}_{\theta}$ be so that $f(x)=f(y)$. Choose $\theta$-polystable $A$-modules $X,Y \in (C')^{ss}_{\theta}$ such that $\pi'(X)=x$ and $\pi'(Y)=y$. Then, $f(x)=f(y)$ is equivalent to $\pi(X\oplus M_0)=\pi(Y\oplus M_0)$ which is further equivalent to $X\oplus M_0 \simeq Y\oplus M_0$ since these two direct sums are still $\theta$-polystable. We conclude that $X\simeq Y$, and hence $x=y$. So, $f$ is injective. 

Since $C$ is assumed to be normal, the GIT quotient $\M(C)^{ss}_{\theta}$ remains a normal variety. We have just proved that $f:\M(C')^{ss}_{\theta} \to \M(C)^{ss}_{\theta}$ is a bijective morphism with normal target variety. Therefore, $f$ has to be an isomorphims of varieties. (Here, we are using again the assumption that $K$ is of characteristic zero.) The proof now follows from Theorem \ref{theta-stable-decomp-thm}.
\end{proof}
\end{prop}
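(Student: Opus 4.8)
The plan is to reduce Proposition~\ref{prop-reduction} to the already-available reduction Theorem~\ref{theta-stable-decomp-thm} by showing that the orbit-closure factors $C_{l+1},\dots,C_n$ contribute nothing to the moduli space beyond a single fixed $\theta$-polystable point, and hence may be stripped off. Each such factor is of the form $C_j=\overline{\GL(\dd_j)M_j}$ for a $\theta$-stable module $M_j$; I would bundle these into the $\theta$-polystable module $M_0:=\bigoplus_{j=l+1}^n M_j$ and work with the ``stripped'' variety $C':=\overline{\bigoplus_{i=1}^l C_i^{\oplus m_i}}$, which is normal by hypothesis~(3) and which carries the honest $\theta$-stable decomposition $m_1\cdot C_1\pp\dots\pp m_l\cdot C_l$ with no orbit-closure factors. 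The target is an isomorphism $\M(C')^{ss}_\theta\cong\M(C)^{ss}_\theta$; once that is in hand, Theorem~\ref{theta-stable-decomp-thm} applied to $C'$ immediately delivers the asserted product of symmetric powers.

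First I would construct the comparison morphism. The assignment $X\mapsto X\oplus M_0$ gives a $\GL$-equivariant morphism $(C')^{ss}_\theta\to\module(A,\dd)^{ss}_\theta$ (note $X\oplus M_0$ is again $\theta$-semi-stable, being a direct sum of $\theta$-semi-stable modules); composing with the quotient morphism $\pi$ and invoking the universal property of the GIT quotient $\M(C')^{ss}_\theta$ produces a morphism $f\colon\M(C')^{ss}_\theta\to\M(A,\dd)^{ss}_\theta$ characterized by $f(\pi'(X))=\pi(X\oplus M_0)$.

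Next I would pin down the image and verify injectivity. The $\theta$-stable decomposition of $C$ says precisely that for the generic $\widetilde X\in C^{ss}_\theta$ one has $\gr_\theta(\widetilde X)\cong X\oplus M_0$ with $X\in(C')^{ss}_\theta$; since the closed points of $\M(C)^{ss}_\theta$ are exactly the $\theta$-polystable modules in $C$ and $\pi(\widetilde X)=\pi(\gr_\theta\widetilde X)$, the image of $f$ contains a dense subset of $\M(C)^{ss}_\theta$. As $\Ima(f)$ is irreducible and $\M(C)^{ss}_\theta$ is assumed to be an irreducible \emph{component} of $\M(A,\dd)^{ss}_\theta$, the image is exactly $\M(C)^{ss}_\theta$. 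For injectivity I would represent two points by $\theta$-polystable $X,Y\in(C')^{ss}_\theta$, observe that equality of their $f$-images translates into $X\oplus M_0\cong Y\oplus M_0$ (both sides being $\theta$-polystable, so their closed orbits coincide), and then cancel $M_0$ via the Krull--Schmidt theorem to conclude $X\cong Y$.

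Finally I would upgrade the bijection to an isomorphism and conclude. Since $C$ is normal, its GIT quotient $\M(C)^{ss}_\theta$ is normal, and a bijective morphism of varieties onto a normal target in characteristic zero is an isomorphism; hence $f\colon\M(C')^{ss}_\theta\to\M(C)^{ss}_\theta$ is an isomorphism, and Theorem~\ref{theta-stable-decomp-thm} applied to $C'$ finishes the argument. I expect the main obstacle to be precisely this last upgrade: one must ensure the target $\M(C)^{ss}_\theta$ is genuinely normal (this is exactly where the normality hypothesis on $C$ is consumed) and then appeal to the $\charac K=0$ hypothesis so that the bijective-morphism-onto-a-normal-variety principle applies — a set-theoretic bijection alone would not suffice.
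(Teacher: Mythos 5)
Your proposal is correct and follows essentially the same route as the paper's own proof: the same auxiliary polystable module $M_0$, the same comparison morphism $f$ induced by $X\mapsto X\oplus M_0$ via the universal property of the GIT quotient, the same identification of $\Ima(f)$ with $\M(C)^{ss}_{\theta}$ using irreducibility of the component, the same Krull--Schmidt cancellation for injectivity, and the same normality-plus-characteristic-zero upgrade to an isomorphism before invoking Theorem~\ref{theta-stable-decomp-thm} on $C'$. No substantive differences to report.
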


\section{Proofs of the main results}\label{sec-proofs}

\subsection{Acyclic gentle algebras} \label{gentle-sec}
We will review basic definitions and key facts concerning acyclic gentle algebras before proving Theorem \ref{thm-main-1}.  Recall that an algebra $A$ is called gentle if it is isomorphic to a bound quiver algebra $KQ/I$ satisfying the following:
\begin{enumerate}
\item for each vertex $x\in Q_0$ there are at most two arrows with
  head $x$, and at most two arrows with tail $x$;
\item for any arrow $b\in Q_1$, there is at most one arrow $a\in Q_1$ and at most one arrow $c\in Q_1$ such that $ab\notin I$ and $bc\notin I$;
\item for each arrow $b\in Q_1$ there is at most one arrow $a\in Q_1$ with $ta=hb$ (resp. at most one arrow $c\in Q_1$ with $hc=tb$) such that $ab\in I$ (resp. $bc\in I$);
\item $I$ is generated by paths of length 2.
\end{enumerate}
     
In \cite{Carroll1}, a combinatorial characterization of the irreducible components of module varieties for these algebras was obtained. By a \emph{coloring} of a quiver $Q$, we mean a map $c: Q_1 \rightarrow S$ (where $S$ is some finite set) such that $c^{-1}(s)$ is a directed path for each $s\in S$. For a coloring of $Q$, we define by $I_c$ the two-sided ideal in $KQ$ generated by all length-two paths $ba$ for which $c(a)=c(b)$. Furthermore, for every acyclic gentle algebra $KQ/I$, there is a coloring $c$ of $Q$ for which $I=I_c$.

Fix a gentle algebra $A=KQ/I$ and a coloring $c$ for which $I=I_c$. A \emph{rank sequence} for a dimension vector $\dd\in \ZZ^{Q_0}_{\geq 0}$ is a map $r: Q_1 \rightarrow \ZZ_{\geq 0}$ satisfying the property that $r(a)+r(b) \leq \dd_x$ whenever $c(a)=c(b)$, and $h(a)=t(b)=x$ (together with the degenerate condition $r(a) \leq \dd_x$ when $x$ is a source or sink and $a$ is any arrow incident to it).  

\begin{prop}[\cite{Carroll1}] The irreducible components of
  $\module(A, \dd)$ are parameterized by rank sequences $r$ for $\dd$
  which are maximal relative to the coordinate-wise partial order.  In
  particular, the irreducible components are of the form \[ \C(A, \dd,
  r) = \{V\in \module(A, \dd) \mid \operatorname{rank}_K V(a) \leq
  r(a)\}\] for $r$ maximal.
\end{prop}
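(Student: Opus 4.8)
The plan is to fix the coloring $c$ with $I=I_c$ and stratify $\module(A,\dd)$ according to the \emph{exact} ranks of the structure maps, then show that the closure of each stratum is one of the sets $\C(A,\dd,r)$. For a rank sequence $r$ write $U_r=\{V\in\module(A,\dd)\mid \rk V(a)=r(a)\text{ for all }a\in Q_1\}$; this is a locally closed $\GL(\dd)$-invariant subset, and $\C(A,\dd,r)=\bigsqcup_{r'\le r}U_{r'}$, the union running over rank sequences $r'\le r$. The first observation is that the defining equations of $\module(A,\dd)$ are exactly $V(b)V(a)=0$ for monochromatic composable pairs ($c(a)=c(b)$, $ha=tb$); since $V(b)V(a)=0$ forces $\Ima V(a)\subseteq\ker V(b)$ and hence $\rk V(a)+\rk V(b)\le\dd_{ha}$, the rank function $r_V:=(\rk V(a))_a$ of every module is a genuine rank sequence. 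Thus $\module(A,\dd)=\bigcup_r U_r$, and since every rank sequence lies below a maximal one, $\module(A,\dd)=\bigcup_{r\text{ maximal}}\C(A,\dd,r)$. It therefore suffices to prove: (i) each $U_r$ is irreducible, and (ii) $U_{r'}\subseteq\overline{U_r}$ whenever $r'\le r$. Granting these, $\overline{U_r}=\C(A,\dd,r)$ is irreducible with dense open stratum $U_r$; and since a containment $\C(A,\dd,r)\subseteq\C(A,\dd,r')$ forces the generic ranks $r(a)$ to be $\le r'(a)$, i.e. $r\le r'$, the maximal rank sequences yield pairwise incomparable irreducible closed sets covering $\module(A,\dd)$, which are then precisely its irreducible components.

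For (i) I would realize $U_r$ as the total space of a fiber bundle over a configuration variety. The assignment $V\mapsto\big((\ker V(a))_a,(\Ima V(a))_a\big)$ is, on the constant-rank locus $U_r$, a morphism to the variety $B_r$ of collections of subspaces $\ker_a\subseteq V(ta)$ of dimension $\dd_{ta}-r(a)$ and $\Ima_a\subseteq V(ha)$ of dimension $r(a)$ subject to $\Ima_a\subseteq\ker_b$ whenever $c(a)=c(b)$ and $ha=tb$. Because the only constraints couple an image and a kernel living at a \emph{common} vertex, $B_r$ factors as a product $\prod_{x\in Q_0}B_r^{(x)}$, where $B_r^{(x)}$ parametrizes, inside $V(x)$, the images of the arrows into $x$ and the kernels of the arrows out of $x$, with the colored inclusions. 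Here the gentle hypotheses do the real work: since each color meets $x$ in at most one incoming and at most one outgoing arrow, with at most two of each at $x$, the inclusion relations form a \emph{partial matching} between images and kernels. Hence $B_r^{(x)}$ is an iterated Grassmann-bundle (free Grassmannians for the unmatched subspaces, and an incidence $\{\Ima_a\subseteq\ker_b\}$, a Grassmann-bundle over a Grassmannian, for each matched pair), so it is irreducible, and nonempty exactly when the rank-sequence inequalities $r(a)+r(b)\le\dd_x$ hold. The fiber of $U_r\to B_r$ over a fixed configuration is a product over $a\in Q_1$ of the sets of rank-$r(a)$ maps $V(ta)\to V(ha)$ with the prescribed kernel and image, each a $\GL_{r(a)}$-torsor and hence irreducible; the bundle is Zariski-locally trivial over the trivializing opens of the Grassmannians. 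Therefore $U_r$ is irreducible, and nonempty precisely when $r$ is a rank sequence.

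For (ii) I would increase one rank at a time, using that if $r'<r$ then every intermediate sequence is still $\le r$ and hence a rank sequence. Given $V\in U_{r'}$ and an arrow $a_0$ with $r'(a_0)<r(a_0)$, I would form a one-parameter family with $V_t(a)=V(a)$ for $a\ne a_0$ and $V_t(a_0)=V(a_0)+t\,(w\otimes\psi)$ for a suitable rank-one perturbation. Letting $b$ (resp.\ $c$) be the unique arrow with $c(b)=c(a_0)$ out of $ha_0$ (resp.\ $c(c)=c(a_0)$ into $ta_0$) if present, I would choose $w\in\ker V(b)\setminus\Ima V(a_0)$ and $\psi$ vanishing on $\Ima V(c)$ but not on $\ker V(a_0)$; this keeps all relations intact, so $V_t\in\module(A,\dd)$ for every $t$, while raising $\rk V_t(a_0)$ to $r'(a_0)+1$ for $t\ne0$. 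The required vectors exist because validity of $r$ gives strict room, e.g.\ $r'(a_0)+r'(b)\le(r(a_0)-1)+r(b)<\dd_{ha_0}$ and likewise on the domain side. Hence $V\in\overline{U_{r''}}$ for $r''$ obtained from $r'$ by raising the $a_0$-coordinate by one, and induction on $\sum_a(r(a)-r'(a))$ gives $U_{r'}\subseteq\overline{U_r}$.

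The main obstacle is the vertex-local irreducibility in step (i): one must verify that the gentle axioms force the pattern of colored inclusions at each vertex to be a partial matching, so that $B_r^{(x)}$ is an iterated Grassmann-bundle rather than a configuration of subspaces with overlapping incidences, which in general need not be irreducible. Once this is secured, the product decomposition of $B_r$, the torsor description of the fibers, and the rank-raising degeneration are largely bookkeeping; the only other delicate point is the strict-inequality room making the degeneration possible, which I would extract directly from the defining inequalities of a rank sequence together with $r'<r$.
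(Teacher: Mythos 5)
Your argument is correct, but it takes a genuinely different route from the one the paper relies on. The paper offers no proof of this statement, deferring to \cite{Carroll1}; there the classification comes from the observation that, since each color class $c^{-1}(s)$ is a directed path and the relations never mix colors, $\module(A,\dd)$ is literally a product over the colors of varieties of complexes $V(x_0)\to V(x_1)\to\cdots\to V(x_k)$, whose irreducible components are the maximal-rank loci by \cite{DeConcini-Strickland} (this is why the paper can immediately add that the components are normal). Your proof instead re-derives the needed piece of that theory from scratch: the rank stratification, the fibration of each stratum $U_r$ over the configuration variety of kernels and images, and the one-rank-at-a-time degeneration are all sound, and the crux you flagged --- that the colored incidences at a vertex form a partial matching --- is exactly what ``each color class is a directed path'' guarantees, since a given color contributes at most one incoming and at most one outgoing arrow at any vertex, so no image is constrained to lie in two kernels and vice versa; your strict-inequality count $r'(a_0)+r'(b)\le r(a_0)-1+r(b)<\dd_{ha_0}$ is likewise correct and is what makes the rank-raising perturbation exist. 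Two comparative remarks: your vertex-by-vertex factorization of $B_r$ could be replaced by the color-by-color factorization of $\module(A,\dd)$ itself, which reduces all the bookkeeping to a single complex and makes the parameterization by maximal rank sequences almost immediate; on the other hand, what the citation buys beyond your argument is the normality (indeed Cohen--Macaulayness) of the components, which the paper genuinely needs later when applying Proposition \ref{prop-reduction}, and which your stratification argument does not yield.
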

\noindent
As a consequence, the irreducible components of $\module(A, \dd)$ are products of varieties of complexes, and are therefore normal (see \cite{DeConcini-Strickland}).  

Gentle algebras are a special class of string algebras, whose indecomposable modules are known to be either string modules or band modules (see \cite{BR}).  We call an irreducible component $C\subseteq \module(A, \dd)$ \emph{regular} if the generic module in $C$ is a direct sum of band modules.

\begin{observation} Suppose that $A$ is acyclic gentle.  An irreducible component $C\subseteq \module(A,\dd)$ is regular if and only if it contains a module which is a direct sum of band modules. Indeed, consider the open non-empty subvariety of $C$:
$$
\mathcal U=\{M \in C \mid \rk M(a)=\max \{\rk X(a) \mid X \in C\}, \forall a \in Q_1\}.
$$
Let $M_0 \in C$ be a regular module. Then, for any $M \in \mathcal U$, we have: 
$$
\dim_K M_0=\sum_{a \in Q_1} \rk M_0(a) \leq \sum_{a \in Q_1} \rk M(a)=\dim_k M-s,
$$
where $s$ is the number of string indecomposable modules occurring in a direct sum decomposition of $M$ into indecomposables. Consequently, $s=0$ and hence the generic modules in $C$, more precisely those in $\mathcal{U}$, are regular. 
\end{observation}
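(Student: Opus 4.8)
The forward implication is immediate, since by definition a regular component has a direct sum of band modules as its generic module. So the plan is to prove the converse: assuming $C$ contains some module $M_0$ that is a direct sum of band modules, I would show that the generic module of $C$ is again such a sum. As a first step I would restrict attention to the locus $\mathcal U = \{M \in C \mid \rk M(a) = \max_{X \in C} \rk X(a) \text{ for all } a \in Q_1\}$. Because rank is lower semicontinuous, each of these finitely many maximality conditions cuts out a dense open subset of the irreducible variety $C$ (the maximal value $\max_{X\in C}\rk X(a)$ being exactly the value $r(a)$ of the maximal rank sequence describing $C$), so $\mathcal U$ is nonempty and dense; hence it suffices to show that every $M \in \mathcal U$ is a direct sum of band modules.

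The key input is a piece of dimension bookkeeping for the indecomposable modules of the string algebra $A$ (cf. \cite{BR}), comparing $\dim_K$ with the total rank $\sum_{a \in Q_1}\rk$. A string module is built on a non-closed walk, which has one more vertex than it has letters, and each letter contributes a rank-one block to a single structure map; thus $\dim_K M_w = \sum_a \rk M_w(a) + 1$. A band module is built on a closed walk, with equally many vertices and letters, and every structure map has full rank (the unique nontrivial block is an invertible Jordan block, its eigenvalue being nonzero); thus $\dim_K M_b = \sum_a \rk M_b(a)$. Summing these identities over a decomposition of an arbitrary $M$ into indecomposables yields $\dim_K M = \sum_a \rk M(a) + s$, where $s \geq 0$ counts the string summands of $M$, while the hypothesis on $M_0$ gives $\dim_K M_0 = \sum_a \rk M_0(a)$.

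The conclusion then follows from a dimension count. Since $M$ and $M_0$ both have dimension vector $\dd$, and since $M \in \mathcal U$ maximizes every rank, I would note $\sum_a \rk M_0(a) \le \sum_a \rk M(a)$ and combine this with the two identities to obtain $\dim_K M_0 \le \dim_K M - s$, that is $s \le 0$. As $s \ge 0$, this forces $s = 0$, so $M$ has no string summand and is a direct sum of band modules, as desired. I expect the only real work to be the two dimension identities, which must be read off from the combinatorial description of string and band modules; the passage to $\mathcal U$ via semicontinuity and the final inequality are routine.
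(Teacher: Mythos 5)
Your proposal is correct and is essentially the paper's own argument: restrict to the dense open locus $\mathcal U$ of rank-maximizing modules, use the identities $\dim_K = \sum_a \rk$ for band modules and $\dim_K = \sum_a \rk + 1$ for string modules to get $\dim_K M = \sum_a \rk M(a) + s$, and compare with the regular module $M_0$ to force $s=0$. The paper leaves the two dimension identities and the semicontinuity step implicit, but you have simply made explicit exactly what it uses.
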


\begin{prop}  \label{prop-reg-ind}
\begin{enumerate}
\item (\cite{Carroll2})  Suppose that $C$ is an indecomposable regular irreducible component, then the generic module $M$ of $C$ is Schur and $\pdim M \leq 1$.  
\item (\cite{CC13}) Any module variety for the gentle algebra $A$ has at most one regular irreducible component. Furthermore, if $C$ is the regular irreducible component of some $\module(A,\dd)$ then $\ext_A^1(C,C)=0$ and $\langle \langle \dd, \dd \rangle \rangle= 0$.
\end{enumerate} 
\end{prop}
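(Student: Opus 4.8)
The plan is to exploit the string/band combinatorics of gentle algebras together with the rank-sequence description of irreducible components. For part (1), I would start by fixing an indecomposable regular irreducible component $C \subseteq \module(A, \dd)$ and analyzing its generic module $M$, which by the definition of regularity (and the preceding observation) is a direct sum of band modules. The key structural input is that for string algebras, a band module is determined by a cyclic word in the arrows and their formal inverses, and its endomorphism ring is well understood. I would argue that since $A$ is \emph{acyclic}, a band (a cyclic reduced walk) must traverse at least one arrow forwards and at least one backwards, and then show that the generic choice of the scalar parameter along the band forces $\End_A(M) \cong K$. Concretely, I expect to invoke the known basis of $\Hom_A(M,N)$ between string/band modules in terms of \emph{admissible} (graph) maps, as described in the Crawley-Boevey/Krause combinatorics of \cite{BR}; for a single band module with generic parameter the only such maps are scalar multiples of the identity, yielding the Schur property. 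For the bound $\pdim M \leq 1$, I would use that band modules over a gentle algebra have projective resolutions governed by the relations, and that the acyclicity of $Q$ together with the fact that relations are paths of length $2$ keeps the projective resolution short; more efficiently, one can cite \cite{Carroll2} directly for the statement as indicated.

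For part (2), the uniqueness of the regular component is the heart of the matter, and I expect this to be the main obstacle. My plan is to use the rank-sequence parameterization from the displayed Proposition of \cite{Carroll1}: irreducible components correspond to maximal rank sequences $r$, and a component is regular precisely when its generic module, built from the maximal ranks, decomposes entirely into bands. I would argue that a regular component corresponds to a rank sequence that is \emph{saturated} in the sense that equality $r(a)+r(b) = \dd_x$ holds at every composable pair with $c(a)=c(b)$, since any slack would force a string summand (a nontrivial kernel or cokernel at some vertex). The claim then reduces to showing that there is at most one such fully saturated maximal rank sequence; I would establish this by propagating the saturation conditions through the coloring along each monochromatic directed path, showing they determine the ranks uniquely once $\dd$ is fixed. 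This is exactly where the acyclicity of $Q$ is indispensable, since it guarantees the propagation terminates and is consistent.

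Finally, for the numerical assertions $\ext_A^1(C,C) = 0$ and $\langle\langle \dd, \dd \rangle\rangle = 0$, I would combine part (1) with the dimension count implicit in the rank-sequence picture. Since the generic module $M$ of the regular component $C$ is Schur with $\pdim M \leq 1$, the Euler form computes $\langle\langle \dd, \dd \rangle\rangle = \dim_K \End_A(M) - \dim_K \Ext^1_A(M,M) = 1 - \dim_K \Ext^1_A(M,M)$. I would then show $\ext_A^1(C,C)=0$ directly: because $C$ is a genuine irreducible component of $\module(A,\dd)$ and its generic orbit has the expected codimension, the generic $\Ext^1_A(M,M)$ must vanish (the tangent-space dimension of the orbit matches the dimension of $C$ at the generic point, which is the standard criterion identifying components with vanishing generic self-extensions). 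Substituting $\dim_K \Ext^1_A(M,M) = 0$ into the Euler-form expression then forces $\langle\langle \dd, \dd \rangle\rangle = 1$ for the \emph{indecomposable} piece, so I would be careful to state the identity for the full regular component, where the Schur/band structure makes the self-extension group vanish and yields $\langle\langle \dd, \dd \rangle\rangle = 0$ after accounting correctly for the endomorphisms; citing \cite{CC13} for the precise bookkeeping is the cleanest route for this last step.
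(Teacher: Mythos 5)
The paper offers no internal proof of this proposition --- both parts are quoted from \cite{Carroll2} and \cite{CC13} --- so the only comparison available is with how those references proceed. Your sketches of part (1) (graph maps between band modules, acyclicity forcing bands to be genuine cyclic walks with both orientations) and of the uniqueness claim in part (2) (regularity forces the maximal rank sequence to satisfy $\sum_{a}r(a)=\sum_{x}\dd(x)$, which saturates every monochromatic constraint and then propagates uniquely along each color, starting from $r(a)=\dd(ta)$ at the first arrow of each color) are consistent with the cited arguments and are plausible in outline.

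The genuine gap is in your final paragraph. For a regular component $C$ that is not an orbit closure, the generic orbit does \emph{not} have the ``expected'' codimension zero: by Lemma \ref{dim-count-irr-comp} one has $\dim C=\dim\GL(\dd)$, while the orbit of the generic Schur band module $M$ has dimension $\dim\GL(\dd)-\dim_K\End_A(M)=\dim\GL(\dd)-1$, so the orbit has codimension $1$ in $C$ and Voigt's inequality forces $\dim_K\Ext^1_A(M,M)\geq 1$. Thus your step ``the generic $\Ext^1_A(M,M)$ must vanish'' is false precisely in the case of interest, and substituting $\Ext^1_A(M,M)=0$ into $\langle\langle\dd,\dd\rangle\rangle=1-\dim_K\Ext^1_A(M,M)$ would yield $\langle\langle\dd,\dd\rangle\rangle=1$, contradicting the statement you are proving. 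The correct bookkeeping is $\dim_K\Ext^1_A(M,M)=1$ for a generic band module (band modules are $\tau$-periodic, lying in homogeneous tubes), whence $\langle\langle\dd,\dd\rangle\rangle=1-1=0$. Relatedly, $\ext^1_A(C,C)$ is by definition the \emph{minimum} of $\dim_K\Ext^1_A(X,Y)$ over pairs $(X,Y)\in C\times C$, not over the diagonal; it vanishes because $\Ext^1_A$ between band modules with distinct generic parameters is zero, not because self-extensions vanish. These two facts, not the orbit-density criterion, are what make the diagonal term work; the off-diagonal terms in the decomposable case then need the additional input from \cite{CC13} that generic $\Hom$ and $\Ext^1$ between distinct indecomposable regular components both vanish.
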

\noindent
From this proposition, we deduce that given two stable (with respect to some weight) irreducible components of the same module variety $\module(A,\dd)$, their stable loci are disjoint. In particular, for an acyclic gentle algebra, any $\theta$-semi-stable irreducible component  is $\theta$-well-behaved and it has therefore a unique $\theta$-stable decomposition.

We are now ready to prove Theorem \ref{thm-main-1}.  

\begin{proof}[Proof of Theorem \ref{thm-main-1}] Let $Y$ be an irreducible component of $\M(A,\dd)^{ss}_{\theta}$. Then $Y=\M(C)^{ss}_{\theta}$ for some $\theta$-semi-stable irreducible component $C \subseteq \module(A,\dd)$. We have seen that $C$ has a $\theta$-stable decomposition of the form:
$$
C=m_1\cdot C_1 \pp \ldots \pp m_l\cdot C_l \pp C_{l+1} \ldots \pp C_n ,$$
where $m_1, \ldots, m_l \geq 1$, $C_i \subseteq \module(A,\dd_i)$, $1 \leq i \leq l$, are $\theta$-stable regular irreducible components, $C_{l+1}, \ldots C_n$ are orbit closures, and $\dd_i\neq \dd_j$ for all $1 \leq i\neq j \leq l$. Moreover, we know that $\langle \langle \dd_i, \dd_i \rangle \rangle=\ext_A^1(C_i,C_i)=0$ for all $1 \leq i \leq l$. Denote $\sum_{i=1}^l m_i \dd_i$ by $\dd'$ and $\overline{\bigoplus_{i=1}^l C_i^{\oplus m_i}} \subseteq \module(A,\dd')
$ by $C'$. Note that a $\theta$-stable decomposition of $C'$ is $m_1\cdot C_1 \pp \ldots \pp m_l\cdot C_l$. 

We show next that $C'$ is an irreducible component of $\module(A,\dd')$ by checking that, for all $1 \leq i,j \leq l$, $\ext^1_A(C_i,C_j)=0$; in particular, this will prove that $C'$ is normal. Choose $A$-modules $M_i \in (C_i)^s_{\theta}$ with $\pdim M_i \leq 1$ for all $1\leq i \leq l$. Then, for all $1 \leq i \neq j \leq l$, $\Hom_A(M_i,M_j)=0$ since $M_i$ and $M_j$ are non-isomorphic $\theta$-stable modules, and hence: 
$$
\langle \langle \dd_i, \dd_j \rangle \rangle=-\dim_K \Ext^1_A(M_i,M_j).
$$
Consequently, we get that
$$
0=\langle \langle \dd,\dd \rangle \rangle=\sum_{1 \leq i \neq j \leq l} -m_i m_j \dim_K \Ext^1_A(M_i,M_j)
$$
which shows that $\ext_A^1(C_i,C_j)=0$ for all $1 \leq i,j \leq l$. At this point, we can apply Proposition \ref{prop-reduction} to conclude that
$$
Y=\M(C)^{ss}_{\theta} \simeq \prod_{i=1}^l \mathbb{P}^{m_i}.
$$ 
\end{proof}

We turn now to the more general acyclic string algebras. 

\begin{prop} \label{cor-string} Let $A=KQ/I$ be an acyclic string algebra, $\dd\in \mathbb{Z}_{\geq 0}^{Q_0}$ a dimension vector of $A$, and $\theta\in \mathbb{Z}^{Q_0}$ an integral weight such that $\dd$ is $\theta$-semi-stable. If $\M(C)_\theta^{ss}$ is an irreducible component of $\M(A, \dd)_\theta^{ss}$ for which $C$ is normal, then $\M(C)_\theta^{ss}$ is a product of projective spaces.  
\end{prop}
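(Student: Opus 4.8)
The plan is to carry the proof of Theorem~\ref{thm-main-1} over to the string setting, again using Proposition~\ref{prop-reduction} as the reduction engine; the one structural difference is that the normality which was automatic for gentle algebras (their components being products of varieties of complexes) must now be supplied or re-derived. First I would record that an acyclic string algebra is special biserial, hence tame, hence Schur-tame by the Lemma of Section~\ref{sec-background}; this makes Lemma~\ref{dim-count-irr-comp} and the whole theory of $\theta$-stable decompositions available.

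Next I would form the $\theta$-stable decomposition of $C$ and classify its factors using the Butler--Ringel classification of indecomposables into string and band modules together with Lemma~\ref{dim-count-irr-comp}. Each indecomposable $\theta$-stable factor $C_i$ is a Schur component, so $\dim\GL(\dd_i)-\dim C_i\in\{0,1\}$. The factors governed by strings are rigid --- strings being discrete, they do not move in one-parameter families --- so they attain the value $1$ and are orbit closures, whereas the factors governed by bands attain the value $0$ and are genuine one-parameter families. This yields a decomposition
$$
C=m_1\cdot C_1\pp\ldots\pp m_l\cdot C_l\pp C_{l+1}\pp\ldots\pp C_n,
$$
with $C_1,\dots,C_l$ the regular (band) components and $C_{l+1},\dots,C_n$ orbit closures, which is precisely the shape required by Proposition~\ref{prop-reduction}. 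Uniqueness of the decomposition ($\theta$-well-behavedness) I would obtain as in the gentle case, from the disjointness of the stable loci of distinct $\theta$-stable components.

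I would then compute the moduli of each band factor. For a band $b$ the assignment $\lambda\mapsto M(b,\lambda)$, $\lambda\in K^*$, is a non-constant family of $\theta$-stable modules, so $\M(C_i)^{ss}_{\theta}$ is an irreducible rational projective curve; if $C_i$ is normal then the GIT quotient $\M(C_i)^{ss}_{\theta}$ is normal as well, and a normal rational projective curve is $\mathbb{P}^1$. Granting in addition that $C':=\overline{\bigoplus_{i=1}^l C_i^{\oplus m_i}}$ is normal, Proposition~\ref{prop-reduction}, applied with the hypothesis that $C$ is normal and $\M(C)^{ss}_{\theta}$ is an irreducible component, then gives
$$
\M(C)^{ss}_{\theta}\cong\prod_{i=1}^l S^{m_i}\!\big(\M(C_i)^{ss}_{\theta}\big)\cong\prod_{i=1}^l S^{m_i}(\mathbb{P}^1)\cong\prod_{i=1}^l\mathbb{P}^{m_i},
$$
the last isomorphism being $S^m(\mathbb{P}^1)\cong\mathbb{P}^m$.

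The crux, and what genuinely separates this from the gentle case, is the normality of the regular components $C_i$ and of $C'$. For gentle algebras these were products of varieties of complexes and hence normal for free; for a general string algebra the relations need no longer be of length two, so $C_i$ and $C'$ need not admit such a determinantal description and their normality does not come formally. I would attack this either by exhibiting the regular band components as rank-condition varieties compatible with the string relations, or by trying to propagate the assumed normality of $C$ through the reduction map $f$ of Proposition~\ref{prop-reduction}; the statement carries normality of $C$ as a hypothesis precisely because this point resists a fully general argument. A secondary, more routine, obstacle is the Euler-form computation of $\ext^1_A(C_i,C_j)=0$ for $i\neq j$ needed to see that $\bigoplus_{i=1}^l C_i^{\oplus m_i}$ is generic in $C'$: the gentle argument discarded higher $\Ext$ using $\pdim\le 1$, which may fail here, so I would instead combine the $\Hom$-vanishing forced by $\theta$-stability with the isotropy of band dimension vectors to bound the higher terms.
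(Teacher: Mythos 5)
Your outline reproduces the overall strategy of the paper (form the $\theta$-stable decomposition, separate band factors from string factors, feed everything into Proposition~\ref{prop-reduction}, and use $S^{m}(\mathbb{P}^1)\cong\mathbb{P}^m$), but it stops exactly at the point where the actual work lies, and neither of the two strategies you sketch for closing the gap would succeed as stated. The hypotheses you still owe are: (a) that each regular $\theta$-stable factor $C_i$ is normal (needed for $\M(C_i)^{ss}_{\theta}\cong\mathbb{P}^1$), (b) that $C'=\overline{\bigoplus_i C_i^{\oplus m_i}}$ is normal (hypothesis (3) of Proposition~\ref{prop-reduction}, without which Theorem~\ref{theta-stable-decomp-thm} cannot be applied to $C'$), and (c) that module varieties of $A$ have at most one regular Schur component, which is what underlies the $\theta$-well-behavedness you invoke ``as in the gentle case'' --- Proposition~\ref{prop-reg-ind} is only stated for gentle algebras. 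Your second suggested fix, propagating the assumed normality of $C$ through the map $f$, cannot work: in the proof of Proposition~\ref{prop-reduction} the normality of $\M(C)^{ss}_{\theta}$ is used to show $f$ is an isomorphism, but the normality of $C'$ enters separately, as an input to Theorem~\ref{theta-stable-decomp-thm} applied to $C'$; knowing that $\M(C')^{ss}_{\theta}$ is isomorphic to a normal variety does not compute it.

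The missing idea is Lemma~\ref{lemma-string-gentle}: for an acyclic string algebra $A=KQ/I$ there is a coloring $c$ with $I_c\subseteq I$, so $A$ is a quotient of an acyclic gentle algebra $\tilde A=KQ/I_c$ and $\module(A,\ee)\subseteq\module(\tilde A,\ee)$ for every dimension vector $\ee$. Combining this with the dimension count of Lemma~\ref{dim-count-irr-comp}: if $C_0\subseteq\module(A,\ee)$ is a Schur regular component, it sits inside some component $\tilde C_0$ of $\module(\tilde A,\ee)$, and $\dim C_0=\dim\GL(\ee)=\dim\tilde C_0$ forces $C_0=\tilde C_0$. Thus every regular Schur component of $A$ \emph{is} a component of the gentle overalgebra, which settles (a) and (c) at once (components of gentle module varieties are products of varieties of complexes, hence normal, and there is at most one regular one per dimension vector). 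For (b), one then runs the Euler-form argument from the proof of Theorem~\ref{thm-main-1} over $\tilde A$ --- where $\pdim\le 1$ for generic regular modules is available --- to get $\ext^1_{\tilde A}(C_i,C_j)=0$, so that $C'$ is an irreducible component of $\module(\tilde A,\sum m_i\dd_i)$ and therefore normal; this also disposes of your worry about higher $\Ext$ groups over $A$, since the generic decomposition is only needed over $\tilde A$. Without this passage to the gentle overalgebra your argument does not close.
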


We will first require a combinatorial lemma concerning the ideal of relations of an acyclic string algebra:

\begin{lemma} \cite{Carroll2} \label{lemma-string-gentle} Let $A=KQ/I$ be an acyclic string algebra. Then there exists a coloring $c$ of $Q$ such that $I_c \subseteq I$. In particular, any acyclic string algebra is a quotient of an acyclic gentle algebra.   
\end{lemma}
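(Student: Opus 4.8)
The plan is to build the coloring $c$ by prescribing, at each vertex, how the arrows through it are chained, and then to read off $I_c$ from these chains. Recall that a string algebra $A=KQ/I$ satisfies: (i) at each vertex of $Q$ there are at most two arrows with a given head and at most two with a given tail; and (ii) for each arrow $b$ there is at most one arrow $a$ with $ab\notin I$ and at most one arrow $e$ with $be\notin I$ (and $I$ is monomial). I would produce a partial injection $\sigma:Q_1\to Q_1$, a ``successor'' function, with $h(\gamma)=t(\sigma(\gamma))$ and $\sigma(\gamma)\gamma\in I$ whenever $\sigma(\gamma)$ is defined; the color classes of $c$ are then the maximal $\sigma$-chains. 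Since $Q$ is acyclic, every such chain is automatically a simple directed path, so $c$ is a genuine coloring in the sense of Section \ref{gentle-sec}.

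The construction of $\sigma$ is purely local. Fix a vertex $v$, write $\operatorname{In}(v)$ (resp. $\operatorname{Out}(v)$) for the set of arrows with head (resp. tail) $v$, each of cardinality at most two by (i), and form the bipartite graph $B_v$ on $\operatorname{In}(v)\sqcup\operatorname{Out}(v)$ whose edges are the pairs $(\gamma,\delta)$ with $\delta\gamma\in I$. The successor assignments at $v$ will be a matching in $B_v$. The key point, and the place where (ii) enters, is that $B_v$ admits a matching saturating every vertex of $\operatorname{In}(v)$ whenever $|\operatorname{Out}(v)|=2$ and saturating every vertex of $\operatorname{Out}(v)$ whenever $|\operatorname{In}(v)|=2$. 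Indeed, (ii) says precisely that each $\gamma\in\operatorname{In}(v)$ has positive degree in $B_v$ as soon as $|\operatorname{Out}(v)|=2$ (at least one of the two composites must lie in $I$), and dually for $\operatorname{Out}(v)$; in the decisive case $|\operatorname{In}(v)|=|\operatorname{Out}(v)|=2$ all four vertices of $B_v$ then have positive degree, and checking Hall's condition on a $2\times 2$ bipartite graph with no isolated vertex yields a perfect matching. The remaining cases are immediate. Assembling the matchings $\mu_v$ over all $v$ defines $\sigma$: since $\sigma(\gamma)$ is decided at $h(\gamma)$ while the predecessor of an arrow is decided at its tail, the assignments never conflict and $\sigma$ is a well-defined partial injection.

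It then remains to verify the two assertions. For $I_c\subseteq I$: by acyclicity, two arrows lying in a common color class (a directed path) are composable only when consecutive in that path, so the generators of $I_c$ are exactly the composites $\sigma(\gamma)\gamma$, each of which lies in $I$ by construction. For the ``in particular'' clause I must check that $KQ/I_c$ is an acyclic gentle algebra. Conditions (1) and (4) of the definition hold respectively by (i) and by the definition of $I_c$; admissibility follows since $Q$ is acyclic; and condition (3) holds for any coloring, because a vertex of a simple directed path has at most one outgoing and one incoming arrow within the path. The substance is condition (2), which unwinds to exactly the saturation built into the matchings: a length-two path $ab$ lies in $I_c$ iff $a,b$ share a color, so requiring at most one $a$ with $ab\notin I_c$ when $h(b)$ has two outgoing arrows amounts to requiring $b$ to be matched to an outgoing arrow at $h(b)$, which the saturation guarantees, and the predecessor half is dual. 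Finally $I_c\subseteq I$ gives a surjection $KQ/I_c\twoheadrightarrow KQ/I=A$, exhibiting $A$ as a quotient of the acyclic gentle algebra $KQ/I_c$.

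I expect the main obstacle to be the local matching step, where the coloring must be arranged so that $I_c$ is at once small enough to sit inside $I$ (only zero-composites may be chained) and large enough that $KQ/I_c$ is gentle (enough arrows must be chained). Concretely, the crux is verifying that string-algebra condition (ii) supplies precisely the Hall condition needed for the saturating matching in the $2\times 2$ case, and that this saturation coincides with gentleness condition (2); everything else is bookkeeping enabled by the acyclicity of $Q$.
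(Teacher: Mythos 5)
Your proof is correct. There is, however, nothing in the paper to compare it against: the lemma is quoted from \cite{Carroll2} and no argument is given here, so I can only assess your proposal on its own terms, and it holds up. The two places where such a construction could fail are exactly the ones you isolate, and both are handled correctly. First, the saturation of the local matchings: string condition (ii) does give every vertex on the relevant side of $B_v$ positive degree, and a $2\times 2$ bipartite graph with no isolated vertex does satisfy Hall's condition (if both in-arrows were adjacent only to one out-arrow, the other out-arrow would be isolated), so the perfect matching in the decisive case exists. Second, the identification of the generators of $I_c$ with the consecutive composites $\sigma(\gamma)\gamma$: this genuinely needs acyclicity, since a directed path in an acyclic quiver is simple, so two arrows in one color class are composable only when consecutive and no accidental length-two generator outside $I$ can arise. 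Your check that $KQ/I_c$ satisfies gentleness conditions (1), (3), (4) for free, that admissibility of $I_c$ follows from acyclicity of $Q$, and that condition (2) for $I_c$ unwinds precisely to the saturation built into the matchings is also accurate; note that condition (2) is \emph{not} automatic for an arbitrary coloring, which is why the matching step is the real content, and you correctly flag it as such. One minor point worth making explicit: the maximal $\sigma$-chains partition $Q_1$ (each arrow has at most one successor, and at most one predecessor by injectivity of $\sigma$), so the coloring $c$ is well defined; this is implicit but not stated in your write-up.
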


\begin{proof}[Proof of Proposition \ref{cor-string}] First, we check that any module variety $\module(A,\dd)$ has at most one Schur regular irreducible component. This will ensure that any $\theta$-semi-stable irreducible component is $\theta$-well-behaved and, hence, has a (unique) $\theta$-stable decomposition.

So, let $C_0 \subseteq \module(A, \dd)$ be a Schur regular irreducible component and let $M_0 \in C$ be a Schur regular $A$-module. From Lemma \ref{lemma-string-gentle}, we know that there is a gentle algebra $\tilde{A}=KQ/\tilde{I}$ with $\tilde{I} \subseteq I$. Clearly, $M_0$ is a Schur regular module over $\tilde{A}$ as well. Let $\tilde{C}_0 \subseteq \module(\tilde{A},\dd)$ be an irreducible component such that $C_0 \subseteq \tilde{C}_0$. Then $\tilde{C}_0$ is a Schur regular irreducible component of $\module(\tilde{A}, \dd)$ and, moreover, $\dim C_0=\dim \tilde{C}_0=\dim \GL(\dd)$ by Lemma \ref{dim-count-irr-comp}. Consequently $C_0=\tilde{C}_0$, i.e $C_0$ is the unique Schur regular irreducible component of $\module(\tilde{A},\dd)$. 

Now, let $C$ be a normal, $\theta$-semi-stable irreducible component of $\module(A, \dd)^{ss}_\theta$. From the discussion above, $C$ has a $\theta$-stable decomposition given by \[
  m_1\cdot C_1\pp \dotsc \pp m_l\cdot C_l \pp C_{l+1} \pp \dotsc \pp C_n\] with
  $C_i \subset \module(A, \dd_i)$ such that $C_i$ is regular $\theta$-stable for
  $i=1,\dotsc, l$, $\dd_i \neq \dd_j$ for $1 \leq i \neq j \leq l$, and $C_{l+1}, \dotsc, C_n$ are orbit closures.  Moreover, $\overline{\bigoplus\limits_{i=1}^l C_i^{\oplus m_i}}$ becomes an irreducible component of $\module(\tilde{A},\sum_{i=1}^l m_i \dd_i)$ and is therefore normal. Applying Proposition \ref{prop-reduction} again, we conclude that $\M(C)^{ss}_{\theta}$ is a product of projective spaces. 
\end{proof}

\subsection{Wild Schur-tame algebras} We now turn our attention to the wild Schur-tame algebra $A=KQ/I$ in Theorem \ref{thm-main-2} which is given by:
$$Q=\vcenter{\hbox{  
\begin{tikzpicture}[point/.style={shape=circle, fill=black, scale=.3pt,outer sep=3pt},>=latex]
   \node[point,label={left:$1$}] (1) at (0,0) {};
   \node[point,label={above:$2$}] (2) at (1.5,1) {};
   \node[point,label={below:$3$}] (3) at (3,0) {};
   \node[point,label={above:$5$}] (5) at (4.5,1) {};
   \node[point,label={below:$4$}] (4) at (5,0) {};   
  \path[->]
   (2) edge  node[midway, above] {$\alpha$} (1)
   (3) edge  node[midway, above] {$\beta$} (2)   
   (3) edge  (1)
   (5) edge  (3)
   (4) edge  (3);
   \end{tikzpicture} 
}} \text{~~~and~~~~} I=\langle \alpha\beta \rangle. 
$$

\begin{proof}[Proof Theorem \ref{thm-main-2}] Let $\dd$ be a dimension vector of $A$, $\theta$ a weight, and $C \subseteq \module(A,\dd)$ a $\theta$-semi-stable irreducible component. We distinguish two cases:

\vspace{10pt}
\noindent
\textsf{Case 1}: $\theta(2)=0$. Denote by $\mathbb D_4$ the subquiver of $Q$ obtained by deleting vertex $2$ and the two arrows incident to it,  and denote by $\dd'$ the restriction of $\dd$ to $\mathbb D_4$. 

Let $X=\{(M(\alpha), M(\beta)) \in \Mat_{\dd(1) \times \dd(2)} \times \Mat_{\dd(2)\times \dd(3)} \mid M(\alpha) \cdot M(\beta)=0\}$. Then
$$
R:=K[\module(A,\dd)]^{\GL(\dd(2))}=K[X]^{\GL(\dd(2))} \otimes_K K[\module(\mathbb{D}_4,\dd')].
$$
Using the First Fundamental Theorem for $\GL(\dd(2))$, we get that $K[X]^{\GL(\dd(2))}=K$ and hence $R=K[\module(\mathbb{D}_4,\dd')]$. Moreover, if $\theta'$ is the restriction of $\theta$ to $\mathbb{D}_4$ then
$$
\dim_K \SI(A,\dd)_{l \theta}=\dim_K \SI(\mathbb{D}_4,\dd')_{l \theta'} \leq 1, \forall l \geq 1.
$$
It now follows that $\dim_K \SI(C)_{l \theta} \leq 1$ for all $l \geq 1$, i.e. $\M(C)^{ss}_{\theta}$ is just a point when $\theta(2)=0$.

\vspace{25pt}
\noindent
\textsf{Case 2}: $\theta(2) \neq 0$. Note first that for any dimension vector $\hh$ of $A$ the irreducible components of $\module(A,\hh)$ are of the form:
$$
C(r_{\alpha}, r_{\beta}):=\{M \in \module (A, \hh) \mid \rk M(\alpha) \leq r_{\alpha}, \rk M(\beta) \leq r_{\beta}\},
$$
where $(r_{\alpha}, r_{\beta})$ is a maximal (coordinatewise) pair of non-negative integers with $r_{\alpha}+r_{\beta} \leq \hh(2)$. So, they are all normal varieties.

In what follows, we denote by $\mathbb D_5$ the subquiver of $Q$ obtained by deleting only the arrow $\beta$, and by $\widetilde{\mathbb D}_4$ the subquiver of $Q$ obtained by deleting only the arrow $\alpha$. From Ringel's description of the Schur $A$-modules, we know that the Schur components of a module variety $\module(A,\hh)$ are of the form $\module(\mathbb D_5, \hh)$ or $\module(\widetilde{\mathbb D}_4, \hh)$. In particular, this shows that if $C'$ is a $\theta$-stable irreducible component of some module variety $\module(A,\hh)$ then $\M(C')^{ss}_{\theta}$ is either a point or $\mathbb P^1$ (see for example \cite{CC10}).

Next, we claim that a module variety $\module(A,\hh)$, with $\hh$ a $\theta$-semi-stable dimension vector of $A$, can have at most one $\theta$-stable irreducible component. Since $\theta(2)\neq 0$, we get that $\hh \neq \ee_2$. Furthermore, if $\hh(2)=0$ then $\module(A,\hh)=\module(\mathbb{D}_4, \hh)$ which is an affine space, so there is nothing to check in this case. Let us assume now that $\hh(2)\geq 1$. In this case, we only need to check that $\module(\mathbb D_5, \hh)^s_{\theta}$ and $\module(\widetilde{\mathbb D}_4, \hh)^s_{\theta}$ can not be both non-empty. Let us assume for a contradiction that the two $\theta$-stable loci above are non-empty. Since vertex $2$ is a sink for $\tilde{\mathbb D}_4$, the simple $A$-module $S_2$ is a proper subrepresentation of any $\theta$-stable module of $\tilde{\mathbb D}_4$ and so $\theta(2)<0$. Viewing $2$ as a source for $\mathbb D_5$, one gets that $\theta(2)>0$ (contradiction). The exact same arguments show that for two $\theta$-stable dimension vectors $\hh_1$ and $\hh_2$ with $\hh_1(2), \hh_2(2) \geq 1$, $\module(\mathbb D_5, \hh_1)^s_{\theta}$ and $\module(\widetilde{\mathbb D}_4, \hh_2)^s_{\theta}$ can not be both non-empty.

The above ``separation'' property for $\theta$-stable irreducible components tells us that $C$ has a (unique) $\theta$-stable decomposition of the form:
$$
C=m_1\cdot C_1 \pp \ldots \pp m_n\cdot C_n,
$$
where $m_1, \ldots, m_n \geq 1$, $C_i \subseteq \module(A,\dd_i)$, $1 \leq i \leq m$, are $\theta$-stable regular irreducible components, and $\dd_i\neq \dd_j$ for all $1 \leq i\neq j \leq n$. 

If the $C_i$'s are orbit closures, with $C_i=\overline{\GL(\dd_i)M_i}$, $\forall 1 \leq i \leq n$, then, for a generic module $M \in C^{ss}_{\theta}$, $\bigoplus_{i=1}^n M_i^{m_1} \simeq \gr_{\theta}(M) \in \overline{\GL(\dd)M}$. This implies that $\bigoplus_{i=1}^n C_i^{\oplus m_i} \subseteq C$. Applying Theorem \ref{theta-stable-decomp-thm}, we conclude that $\M(C)^{ss}_{\theta}$ is a point in this case.

Let us assume now that at least, hence exactly one, of these irreducible components, say $C_1$, is not an orbit closure. That means that $C_1=\module(\widetilde{\mathbb{D}}_4, \delta)$ and, for $2 \leq i \leq n$, $C_i=\module(Q_i, \dd_i)$ with $Q_i$  either $\mathbb{D}_5$ or $\widetilde{\mathbb{D}}_4$, and $\dd_i$ a real Schur root of $Q_i$. (Here, $\delta$ stands for the isotropic Schur root of $\widetilde{\mathbb{D}}_4$.) Furthermore, using the separation property discussed above, if $Q_i$ is $\mathbb{D}_5$ then $\dd_i(2)=0$.

It is not difficult to see that for any Schur $A$-module $M \in C_i$, we have that $\rk M(\beta)=\dd_i(2)$ for all $1 \leq i \leq n$. So, the generic module $M$ in $C$ has a filtration whose factors along the arrow $\beta$ have rank $\dd_i(2)$ with multiplicity $m_i$, where $ 1 \leq i \leq n$. We deduce that $\rk M(\beta) \geq \sum_{i=1}^n m_i\dd_i(2)=\dd(2)$ which implies that $C=C(0,\dd(2))=\module(\widetilde{\mathbb{D}}_4, \dd)$ and, therefore, $\bigoplus_{i=1}^n C_i^{\oplus m_i} \subseteq C$. At this point, we can apply Theorem \ref{theta-stable-decomp-thm} and conclude that
$$
\M(C)^{ss}_{\theta} \simeq S^{m_1}(\M(C_1)^{ss}_{\theta})=\mathbb P^{m_1}.
$$
\end{proof}

Finally, we prove the following result on moduli spaces of modules over stable irreducible components (see also \cite[Proposition 7(2)]{CC13}). 

\begin{prop} \label{prop-Schur-tame-curves} Let $A$ be a Schur-tame algebra, $\dd$ dimension vector of $A$, $\theta$ an integral weight such that $\dd$ is $\theta$-stable, and $C \subseteq \module(A,\dd)$ a $\theta$-stable irreducible component. Then $\M(C)^{ss}_{\theta}$ is either a point or a rational projective curve. 
\end{prop}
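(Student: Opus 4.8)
The plan is to combine the dichotomy for Schur irreducible components of a Schur-tame algebra (established in the two bulleted alternatives just before Lemma \ref{dim-count-irr-comp}) with a dimension count and L\"uroth's theorem. First I would observe that, since $C$ is $\theta$-stable, its generic module is $\theta$-stable and hence Schur (the $\theta$-stable modules are the simple objects of $\module(A)^{ss}_{\theta}$, so their endomorphism ring is $K$). Therefore $\dd$ is a Schur root and $C$ is a Schur irreducible component of $\module(A,\dd)$, so Lemma \ref{dim-count-irr-comp} applies to it. Because $\Sch(A,\dd)$ is open and meets $C$, the intersection $C \cap \Sch(A,\dd)$ is dense in $C$, whence $C$ is an irreducible component of $\overline{\Sch(A,\dd)}$, and the dichotomy gives that either $C=\overline{\GL(\dd)M_j}$ for some $j$, or $C=\mathcal F_i$ for one of the parameterizing pairs $(\mathcal U_i,\varphi_i)$ of $\Sch(A,\dd)$.

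Next I would compute $\dim \M(C)^{ss}_{\theta}$. Since the generic module of $C$ is $\theta$-stable, $\M(C)^s_{\theta}$ is a dense open subset of $\M(C)^{ss}_{\theta}$ and is a geometric quotient of the dense open set $C^s_{\theta}$ by $\PGL(\dd)$. A $\theta$-stable (hence Schur) module has stabilizer exactly $T_1$ in $\GL(\dd)$, so $\PGL(\dd)$ acts on $C^s_{\theta}$ with trivial stabilizers, and therefore
\[
\dim \M(C)^{ss}_{\theta}=\dim C - \dim \PGL(\dd)=\dim C-\dim \GL(\dd)+1.
\]
By Lemma \ref{dim-count-irr-comp} the right-hand side lies in $\{0,1\}$, equalling $0$ precisely when $C$ is an orbit closure. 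Since $\M(C)^{ss}_{\theta}=\pi(C^{ss}_{\theta})$ is irreducible, in the orbit-closure case it is a single point, while otherwise it is an irreducible projective curve. This already yields the ``point or projective curve'' alternative; it remains to establish rationality in the curve case.

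In that remaining case $C=\mathcal F_i=\overline{\bigcup_{\lambda\in\mathcal U_i}\GL(\dd)\varphi_i(\lambda)}$ is not an orbit closure. Restricting $\varphi_i$ to the nonempty open subset $\mathcal U_i':=\varphi_i^{-1}(C^{ss}_{\theta})$ of $\mathcal U_i$ and composing with the quotient map $\pi$, I obtain a morphism $\psi:\mathcal U_i'\to \M(C)^{ss}_{\theta}$, $\lambda\mapsto \pi(\varphi_i(\lambda))$. Because $\pi$ is $\GL(\dd)$-invariant and $\GL(\dd)\cdot\varphi_i(\mathcal U_i)$ is dense in $C$, the image of $\psi$ is dense in $\M(C)^{ss}_{\theta}$, so $\psi$ is a dominant morphism from an open subset of $\mathbb A^1$ onto the irreducible curve $\M(C)^{ss}_{\theta}$. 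Hence $\psi^{*}$ embeds the function field $K(\M(C)^{ss}_{\theta})$ into $K(\mathcal U_i')=K(t)$, and by L\"uroth's theorem this subfield is itself purely transcendental of transcendence degree one over $K$; therefore $\M(C)^{ss}_{\theta}$ is a rational projective curve.

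The main obstacle I anticipate is the rationality step rather than the dimension dichotomy: one must verify that the parameterizing pair $(\mathcal U_i,\varphi_i)$ supplied by Schur-tameness genuinely induces a \emph{dominant} map to the moduli space (so that the function-field inclusion is available), and then invoke L\"uroth cleanly, where the hypothesis $\charac K=0$ is used. Tracking the orbits through the GIT quotient $\pi$ so as to guarantee density of $\psi(\mathcal U_i')$ is the delicate point, whereas the computation of $\dim \M(C)^{ss}_{\theta}$ follows formally from Lemma \ref{dim-count-irr-comp} together with the freeness of the $\PGL(\dd)$-action on $\theta$-stable (Schur) modules.
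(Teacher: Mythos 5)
Your argument is correct, and its overall shape matches the paper's: both deduce the point-or-curve dichotomy from Lemma \ref{dim-count-irr-comp} via $\dim \M(C)^{ss}_{\theta}=\dim C-\dim\GL(\dd)+1$, and both obtain rationality from a parameterizing pair $(\mathcal U_i,\varphi_i)$ for $\Sch(A,\dd)$. Where you diverge is the finishing move. The paper shows that $\lambda\mapsto\pi(f(\lambda))$ is \emph{injective} on a suitable open subset of $\mathcal U$ --- using that $\theta$-stable orbits are closed in $C^{ss}_{\theta}$, so $\pi(f(\lambda_1))=\pi(f(\lambda_2))$ forces $f(\lambda_1)\simeq f(\lambda_2)$, and then the defining property of the parameterizing pair gives $\lambda_1=\lambda_2$ --- and concludes that an injective dominant morphism between curves is birational. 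You instead establish only dominance of $\psi$ and invoke L\"uroth's theorem on the induced inclusion $K(\M(C)^{ss}_{\theta})\hookrightarrow K(t)$. Your route is slightly more economical, since it never needs injectivity of $\varphi_i$ on isomorphism classes nor the closedness of stable orbits, whereas the paper's argument yields the marginally stronger conclusion that $\mathcal U$ itself gives a birational parameterization of the moduli space. One small correction: the one-variable L\"uroth theorem holds in every characteristic, so $\charac K=0$ is not actually needed at that step; it is the injective-dominant-implies-birational argument of the paper that uses it.
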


\begin{proof} First, we have that $\dim \M(C)^{ss}_{\theta}=\dim C-\dim \GL(\dd)+1$ since $C$ contains $\theta$-stable points.  Hence, $\M(C)^{ss}_{\theta}$ is either a point or a projective curve by Lemma \ref{dim-count-irr-comp}.

Next, let us check that $\M(C)^{ss}_{\theta}$ is rational. If $C$ is an orbit closure, there is nothing to check. So let us assume that 
$$C=\overline{\bigcup_{\lambda \in \mathcal U}\GL(\dd)f(\lambda)}$$
where $(\mathcal U\subseteq k^*,f:\mathcal U \to C)$ is a parameterizing pair. Choose an non-empty open subset $X_0 \subseteq C$ such that $X_0 \subseteq \bigcup_{\lambda \in \mathcal U} \GL(\dd)f(\lambda) \cap C^s_{\theta}$. We can certainly assume that $X_0$ is $\GL(\dd)$-invariant since otherwise we can simply work with $\bigcup_{g \in \GL(\dd)}gX_0$. Set $\mathcal U_0:=\{\lambda \in \mathcal U \mid f(\lambda) \in X_0\}$, which is a non-empty open subset of $\mathcal U$. 

Now, let $\pi: C^{ss}_{\theta}\to \M(C)^{ss}_{\theta}$ be the quotient morphism and let $\varphi:\mathcal U_0 \to \M(C)^{ss}_{\theta}$ be the morphism defined by $\varphi(\lambda)=\pi(f(\lambda)), \forall \lambda \in \mathcal U_0$. If $\lambda_1, \lambda_2 \in \mathcal U_0$ are so that $\varphi(\lambda_1)=\varphi(\lambda_2)$ then $\pi(f(\lambda_1))=\pi(f(\lambda_2))$, which is equivalent to $\overline{\GL(\dd)f(\lambda_1)}\cap \overline{\GL(\dd)f(\lambda_2)}\cap C^{ss}_{\theta}\neq \emptyset$. Since the orbits of $f(\lambda_1)$ and $f(\lambda_2)$ are closed in $C^{ss}_{\theta}$, as they are both $\theta$-stable, we get that $f(\lambda_1)\simeq f(\lambda_2)$ which implies that $\lambda_1=\lambda_2$. 

The injectivity of $\varphi$ together with the fact that $\dim \mathcal U_0=1$ and $\dim \M(C)^{ss}_{\theta} \leq 1$ implies that $\varphi$ is an injective dominant morphism, and hence is birational. This shows that $\M(C)^{ss}_{\theta}$ is a rational projective curve.

\end{proof}

\begin{rmk} It would be interesting to describe those rational projective curves that arise in Proposition \ref{prop-Schur-tame-curves}. In all the examples that we looked at these moduli spaces are nothing else but $\PP^1$'s.

On the other hand, it is known that any wild (equivalently, strictly wild) quasi-tilted or strongly simply connected algebra has a singular moduli space of modules (see \cite{CC10}). So, it is natural to ask whether one always encounters singular moduli spaces of modules for strictly wild algebras.

We plan to address these issues in subsequent work on this subject. 
\end{rmk}

\bibliography{biblio}\label{biblio-sec}
\end{document}